\newcommand{\RM}{\mathrm{RM}}
\newcommand{\T}{\mathcal{T}}
\newcommand{\E}{\mathcal{E}}
\newcommand{\RE}{\mathbb{R}}
\newcommand{\vertiii}[1]{{\left\vert\kern-0.25ex\left\vert\kern-0.25ex\left\vert #1 
    \right\vert\kern-0.25ex\right\vert\kern-0.25ex\right\vert}}
\newcommand{\bfu}{\boldsymbol{u}}
\newcommand{\bff}{\bar{f}}
\newcommand{\bfv}{\boldsymbol{v}}
\newcommand{\bfz}{\boldsymbol{z}}
\newcommand{\bfepsilon}{\boldsymbol{\varepsilon}}
\newcommand{\bfw}{\boldsymbol{w}}
\newcommand{\Uh}{\boldsymbol{U}_h^{k,j}} %K
\let\div\undefined
\DeclareMathOperator{\div}{div}
\newtheorem{theorem}{Theorem}
\newtheorem{lemma}[theorem]{Lemma}
\theoremstyle{remark}
\newtheorem{remark}{Remark}
\theoremstyle{definition}
\newtheorem{definition}{Definition}
\newcommand{\bu}{{\boldsymbol u}}
\newcommand{\bv}{{\boldsymbol v}}
\newcommand{\bw}{{\boldsymbol w}}
\newcommand{\bU}{{\boldsymbol U}}
\newcommand{\bV}{{\boldsymbol V}}
\numberwithin{equation}{section}
\numberwithin{theorem}{section}
\numberwithin{remark}{section}
\numberwithin{definition}{section}
\begin{document}

\title{Superconvergence of DPG approximations in linear elasticity}
\author{Fleurianne Bertrand, Henrik Schneider}
\date{}
\maketitle

{\bf{Abstract:}} 
Existing a priori convergence results of the discontinuous Petrov-Galerkin method to solve the problem of linear elasticity are improved. Using duality arguments, we show that higher convergence rates for the displacement can be obtained. Post-processing techniques are introduced in order to prove superconvergence and numerical experiments {\color{black} confirm} our theory.

%
%\begin{resume} 
%Les résultats de convergence s'applicant aux méthodes de Petrov-Galerkine discontinues sont ameliorées pour  les équations de l'élasticité linéaire. En utilisant un argument de dualité, nous prouvons que la convergence sur les déplacements est d'ordre plus élevé. Le  post-traitement de la solution permet de prouver des résultats de super-convergence et les examples numériques valident notre théorie.
%\end{resume}
%
%\subjclass{65N30}
%
%\keywords{discontinuous Petrov-Galerkin, linear elasticity}
%

%
% There should be no $\Gamma_D,1$ anymore.

\section*{Introduction}

% DPG intro
Finite element approximation of partial differential equations using the DPG
method is a popular and effective technique introduced in \cite{DPG1,DPG2,DPG3}.
It is a minimal residual method with broken test spaces and can therefore be
seen simultaneously as a Least-Squares method and as a saddle-point method. 
{\color{black} The critical idea is the optimal test function approximation that guarantees
discrete stability. This 
major advantage allow the use of
broken test spaces consisting of functions with no continuity requirements at
element interfaces.} An elementary characterisation of the natural norms
on these interface spaces is provided in \cite{Maxwell}. Moreover, sufficient
conditions under which stability of broken forms {\color{black}follows} from {\color{black}the} stability of their
unbroken relatives were stated.

% Solid mechanics
 The inherent stability properties of the DPG method make it a promising approach in solid mechanics, especially for the determination of vibrations of elastic structure due to the possibility of obtaining a pointwise symmetric
approximation for stresses {\color{black}
in a stable way}. Moreover, the simultaneous 
approximation of the stress-tensor $\sigma$ and the displacement $u$, the DPG method also yields robust error bounds and the contributions
\cite{BDGQ12,Carstensen2016} confirm their suitability in
computational mechanics. However, although the stress-tensor and the displacement are related by the strain-stress relationship $\sigma = \mathbb C \varepsilon (u)$
with the symmetric gradient  $\varepsilon(u) = \frac 1 2 (\nabla u + (\nabla u)^T) $, both functions are usually approximated simultaneously with the same order. The
first aim of this paper is therefore to prove better convergence rates for the displacement variable $u$, either by increasing the polynomial order of the corresponding
approximation space or by defining an approximation of the scalar field variable by suitable postprocessing. This issue has been raised and addressed for the Poisson
problem in \cite{F18} {\color{black}and for the general second order case in \cite{ultraweak-duality}. A similar result for superconvergence for the primal DPG formulation was devoloped in \cite{BGH14}.} It is also closely related to the refined error estimates of the Least-Squares method of \cite{BHsuperclosedness}.

% Longterm vision
As a second motivation, the refined error estimates proved in this paper also allow us to pave the way for the consideration of the DPG method for the determination of vibrations
of elastic structures. Dual-mixed formulations have been considered in
the solid for the elastoacoustic source problem (see,e.g.,
\cite{Gatica2007,Gatica2012}) but their extension to {\color{black}the} eigenvalue problem does not
fit the existing theories for mixed eigenvalue problems. DPG-based formulations would fit this theory, because of the gained
flexibility of the finite element spaces. However, similarly to the issues arising in the Least-Squares context (see \cite{imals, BLS4eigelas, BLS4eigelas2}), superconvergence results are crucial to prove the
convergence of eigenvalue approximation with the DPG method {\color{black}(see \cite{BBS22})}, and the aim of this paper.

\section{The DPG formulation}
The DPG method under consideration is based on the first-order system of the linear elasticity equations in a domain $\Omega$
\begin{subequations}
    \label{eq:source-problem}
    \begin{align}
        \mathcal A \sigma - \varepsilon(u)   &= %\mathcal{A}\bff
        0 \quad \text{in}\  \Omega,\\ 
        \label{eq:momentum}
        -\div \sigma &= f \quad \text{in} \  \Omega ,\\
        u &= 0 \quad \text{on}\  \Gamma_D,\\
        \sigma \cdot \nu &= 0 \quad \text{on}\  \Gamma_N,
    \end{align}
\end{subequations}
where $\mathcal A :
\mathbb{S} \rightarrow \mathbb{S} $  is the compliance tensor given by $\tau \mapsto \frac{1}{2\mu} \tau  -
\frac{\lambda}{2\mu(2\mu+d\lambda)} \mathrm{tr}(\tau) I_{d\times d}$, with the Lamé parameter $\lambda$ and $\mu$. 
The extension for asymmetric matrices is given by $\mathcal A : \mathbb M \rightarrow \mathbb M$ $\tau \mapsto
\mathcal{A}(\mathrm{sym}(\tau)) + \mathrm{as}(\tau)$.
We assume that the domain $\Omega \subset  \RE^d {\color{black} =:\mathbb V} $ is a Lipschitz domain and that the space dimension $d$ equals 2 or 3. The boundary $\partial \Omega$ consists of two open subsets $\Gamma_{D}$ and $\Gamma_{N}$, where Dirichlet and Neumann conditions are prescribed.
The boundary part $\Gamma_{D}$, where the elastic body is clamped, is assumed to be of positive measure and $\Gamma_{N}$ is assumed to be its complement $\partial \Omega \backslash \Gamma_{D}$, i.e. $\partial \Omega = \overline{\Gamma_D \cup \Gamma_N}$, $\Gamma_D \cap
\Gamma_N = \emptyset$. Under these assumptions, problem \eqref{eq:source-problem} admits a unique solution $(\sigma, u) \in
 H_{\Gamma_N}(\div,\mathbb S)
\times
H^1_{\Gamma_D}(\Omega,{\color{black}\mathbb V})$
 for all $f\in L^2(\Omega;{\color{black}\mathbb V})$
 with 
\begin{align*}
    &{\color{black}L^2(\Omega;\mathbb V) = \left[L^2(\Omega)\right]^d},\\
    &{\color{black}H^1_{\Gamma_D}(\Omega;\mathbb V) =  \left[H^1_{\Gamma_D}(\Omega)\right]^d,}\\
    &H_{\Gamma_N}(\operatorname{div}, \Omega ; \mathbb{S})=\left\{\sigma \in L^{2}(\Omega ; \mathbb{S}): \operatorname{div} \sigma \in L^{2}\left(\Omega,  {\color{black}\mathbb V} \right), \ \sigma \cdot \nu = 0 \text{ on }\Gamma_N\right\} .
\end{align*}
Throughout this paper, the domain $\Omega$ is assumed to be such that the following regularity assumption holds
\begin{align}
\label{eq:regularity}\Vert u \Vert_{H^2(\Omega)} + \Vert \sigma\Vert_{H^1(\T)} &\leq C \Vert f\Vert  .
\end{align}
{\color{black}
\begin{remark}
This regularity estimate is fulfilled for $d=2$ if $\Omega$ is a convex polyhedral domain. {\color{black} If $u\in H^1_{\Gamma_D}(\Omega;\mathbb V)$ solves
    \begin{align*}
        -\div (\mathbb C \varepsilon(u)) = f \in L^2(\Omega;\mathbb V),
    \end{align*}
    then $u\in H^2(\Omega;\mathbb V)$ and $\Vert u \Vert_{H^2(\Omega;\mathbb V)} \leq C_1 \Vert f\Vert$, see \cite{G85}.} Using the stress-strain relationship yields 
    \begin{align*}
        \Vert\sigma\Vert_{H^1(\T)}\ = \Vert\mathcal A\varepsilon(u)\Vert_{H^1(\T)} \leq C_2 \Vert u \Vert_{H^2(\Omega)} \leq C_1C_2 \Vert f\Vert,
    \end{align*}
    where $C_2$ is robust for $\lambda \rightarrow \infty$.
\end{remark}
}
%The above mentioned stress properties imply that the exact stress $\sigma$ on an elastic body occupying $\Omega \subseteq \mathbb{R}^{N}$ lies in the space

% Ultraweak

The ultra-weak formulation under consideration is  derived from \eqref{eq:source-problem} by testing  with broken test functions, based on a shape-regular simplicial triangulation $\mathcal T$. It allows for the broken test spaces
\begin{subequations}
    \label{eq:brokentestspaces}
    \begin{align}
    H^{1}(\mathcal{T}) &:=\left\{v \in L^{2}(\Omega):\left.v\right|_{T} \in H^{1}(T)\ {\color{black}\forall }T \in \mathcal{T}\right\}, \\ {H}(\operatorname{div} ; \mathcal{T}) &:=\left\{{\tau} \in {L}^{2}(\Omega):\left.{\tau}\right|_{T} \in {H}(\operatorname{div} ; T)\ {\color{black}\forall} T \in \mathcal{T}\right\}     
    \end{align}
\end{subequations}
and for the piecewise differential operators $\nabla_{\mathcal{T}}: H^{1}(\mathcal{T}) \rightarrow {L}^{2}(\Omega)$ and  $\operatorname{div}_{\mathcal{T}}: {H}(\operatorname{div} ; \mathcal{T}) \rightarrow L^{2}(\Omega)$
defined on each element $T\in \mathcal T$ by
\begin{align}
\nabla_{\mathcal{T} } v|_{T}:=\nabla\left(\left.v\right|_{T}\right),\left.\quad \operatorname{div}_{\mathcal{T}} {\tau}\right|_{T}:=\operatorname{div}\left(\left.{\tau}\right|_{T}\right) .
\end{align}

Using these broken spaces as test spaces has the advantage that the field variables $(\sigma, u)$ can be sought in $L^2(\Omega)$ allowing for discontinuous {\color{black} trial} functions. A weak continuity condition is imposed with the introduction of trace variables living in the following skeleton spaces: 
\begin{subequations}
\begin{align}
H^{1 / 2}_{\Gamma_D}(\partial\mathcal T) &:=\left\{\widehat{u} \in \Pi_{T \in \mathcal{T}} H^{1 / 2}(\partial T): \exists w \in H^{1}_{\Gamma_D}(\Omega) \text { such that }\left.\widehat{u}\right|_{\partial T}=\left.w\right|_{\partial T} \forall T \in \mathcal{T}\right\}, \\
H^{-1 / 2}_{\Gamma_N}(\partial\mathcal T) &:=\left\{\widehat{\sigma} \in \Pi_{T \in \mathcal{T}} H^{-1 / 2}(\partial T): \exists \boldsymbol{q} \in \boldsymbol{H}_{\Gamma_N}(\operatorname{div} ; \Omega) \text { such that }\left.\widehat{\sigma}\right|_{\partial T}=\left.\left(\boldsymbol{q} \cdot \boldsymbol{n}_{T}\right)\right|_{\partial T} \forall T \in \mathcal{T}\right\}\ .
    \end{align}
\end{subequations}
{\color{black}
With the Sobolev spaces
\begin{subequations}
\begin{align}
    &L^2(\Omega;\mathbb M)= \left[L^2(\Omega)\right]^{d\times d},\\
    &
    L^2(\Omega;\mathbb A)= \{v \in L^2(\Omega, \mathbb M {\color{black})} \ : \ v = -v^T\},\\
    &H(\div,\T;\mathbb S) = \{v \in \left[H(\div,\T)\right]^d \ : \ v = v^T\}\ ,
\end{align}
\end{subequations}
}
our DPG formulation seeks $(\sigma, u) \in
L^2(\Omega; \mathbb M) \times L^2(\Omega;\mathbb V) $ as well as $(\hat\sigma_n, \hat u) \in {\color{black} H^{-1/2}_{\Gamma_N}(\partial \T) \times H^{1/2}_{\Gamma_D}(\partial\T)}$
such that 
\begin{align}\label{eq:dpg}(\mathcal A\sigma,\tau)  
    + (u,\div_\T \tau) + (\sigma, \nabla_\T v) +(\sigma,q) 
    - \langle \hat u, \tau\cdot\nu\rangle_{\partial\T} - \langle\hat\sigma_n,v\rangle_{\partial\T}
     = (f,v)
\end{align}
    holds for all $(\tau,v,q) \in  H(\div, \T;\mathbb S) \times H^1(\T;\mathbb V)\times L^2({\color{black}\Omega} ; \mathbb A)$. 
To simplify the notation, we also introduce the spaces ${\bU} = L^2(\Omega; \mathbb M) \times L^2(\Omega;\mathbb V)   \times
      {\color{black}H^{-1/2}_{\Gamma_N}(\partial \T) \times  H^{1/2}_{\Gamma_D}(\partial\T) }
$ and $
{\bV} = H(\div, \T;\mathbb S) \times H^1(\T;\mathbb V)\times L^2({\color{black}\Omega ;}\mathbb A)$ 
as well as the bilinear forms
\begin{subequations}
\begin{align}
    b({\bfu},{\bfv}) &= (\mathcal A\sigma,\tau)  
    + (u,\div_\T \tau) + (\sigma, \nabla_\T v) +(\sigma,q) 
    - \langle \hat u, \tau\cdot\nu\rangle_{\partial\T} - \langle\hat\sigma_n,v\rangle_{\partial\T},\\
    l({\bfv} ) &= (f,v)\ .
    \end{align}
\end{subequations}
for ${\bfu} = (\sigma,u,\hat\sigma_n, \hat u)$ and ${\bfv} = (\tau,v,q)$. With these notations, the variational formulation \eqref{eq:dpg} allows for the abstract form
\begin{align}
    \label{eq:dpg-abstract}
    b({\bfu},{\bfv}) = l({\bfv})\quad \forall {\bfv} \in {\color{black}  \bV} .
\end{align}

The well-posedness of this formulation is shown in \cite{BDGQ12} and the proof relies on the following three key properties: 
\begin{enumerate}
    \item uniqueness 
\begin{align}
    \{ \bw \in \bU \ : \ b(\bw,\bv) = 0, \ \forall \bv\in \bV\} = \{ 0\}, \label{ass:I}
\end{align}
\item inf-sup condition
\begin{align}
    \Vert \bv \Vert_\bV \lesssim \sup_{\bw\in \bU \setminus \{0\}} \frac{b(\bw,\bv)}{\Vert \bw\Vert_\bU} \quad \forall \bv \in \bV,\label{ass:II}
\end{align}
\item continuity of $b$
\begin{align}
    b(\bw,\bv) \lesssim \Vert \bw\Vert_\bU\Vert \bv\Vert_\bV \quad \forall \bw\in \bU,\ \forall \bv\in \bV \label{ass:III},
\end{align}
\end{enumerate}
with the norms
\begin{subequations}
\begin{align}
    \Vert (\sigma, u, \hat\sigma_n, \hat u)\Vert_\bU^2 &:= \Vert \sigma\Vert^2 
                                                        + \Vert u \Vert^2
                                                        + \Vert \hat\sigma_n\Vert^2_{H^{-1/2}(\partial\T)}
                                                        + \Vert \hat u \Vert^2_{H^{1/2}(\partial\T)},\\
    \Vert (\tau,v,q)\Vert_\bV^2 &:= \Vert \nabla_{\T} v\Vert^2 +
                                   \Vert v \Vert^2 +
                                   \Vert \div_{\T} \tau\Vert^2 +
                                   \Vert \tau\Vert^2 +
                                   \Vert q \Vert^2.
\end{align}\end{subequations}
In particular, the operator $B : \bU \rightarrow \bV^\prime$ defined by $(B \bu)(\bv) = b(\bu, \bv)$ and the trial-to-test operator $\Theta:\bU\rightarrow \bV$ defined by
\begin{align}
\label{eq:theta}
    (\Theta \bfw, \bfv)_\bV = b(\bfw,\bfv) \qquad \forall \bfv \in \bV
\end{align}
allow for a constant $C>0$ such that
\begin{align}
C^{-1}\|\boldsymbol{u}\|_{\bU}^{2} \leq\|B \boldsymbol{u}\|_{\bV^{\prime}}^{2}=b(\boldsymbol{u}, \Theta \boldsymbol{u}) \leq C\|\boldsymbol{u}\|_{\bU}^{2} 
\end{align}
holds for all $\bu\in\bU$.

\begin{comment}
We introduce now the broken ultra-weak DPG formulation
for this problem. Let 
\begin{align*}
    U_0 &= L^2(\Omega; \mathbb M) \times L^2(\Omega;\mathbb V)   \\
    \hat U &=  H^{1/2}_{\Gamma_D}(\partial\T) \times H^{-1/2}_{\Gamma_N}(\partial \T), \\
    U &= U_0 \times \hat U \\
    V &= H(\div, \T;\mathbb S) \times H^1(\T;\mathbb V)\times L^2({\color{black}\T ;}\mathbb A)
\end{align*}
Then we can formulate the ultra-weak formulation as follows
\begin{align*}
    b_0((\sigma,u),(\tau,v,q)) &:= 
    (\mathcal A\sigma,\tau)  
    + (u,\div \tau) + (\sigma, \nabla_\T v) +(\sigma,q)  \\
    \hat b ((\hat\sigma_n, \hat u),(\tau,v)) &:= - \langle \hat u, \tau\cdot\nu\rangle_{\partial\T} - \langle\hat\sigma_n,v\rangle_{\partial\T}\\
    a((x,y,z),(\hat x,\hat y, \hat z)) &:= (z,\hat z)_V - \overline{b_0(\hat x,z)} - \overline{\hat b(\hat y, z)}
                                            + b_0(x,\hat z) + \hat b(y, \hat z) \\ 
                                            &\qquad\text{where} (x,y,z),(\hat x,\hat y, \hat z)\in U_0 \times \hat U\times V \\
    l(v) &:= (f,v) + (\bff,\mathcal{A}\tau) .
\end{align*}
\end{comment}

\section{Finite element approximation}
{\color{black} 
Let $\bU_h\subset \bU$ be a  conforming finite-dimensional subspace of $\bU$. 
The ideal DPG method now seeks $\bfu_h \in \bU_h$ such that
\begin{align}\label{eq:dpghI}
    b(\bfu_h,\Theta \bfw_h) = l(\Theta \bfw_h)\quad {\color{black}\forall } \bfw_h\in \bU_h. 
\end{align}
To obtain the practical DPG method, the test space is replaced by a finite element space $\bV_h
\subset \bV$ with
}
 the following crucial compatibility condition on the spaces: there exists a Fortin {\color{black}operator} $\Pi : \bV \rightarrow \bV_h$ such that there exists a constant $C_\Pi>0$ with
\begin{align}
    b(\bu_h,\bv-\Pi\bv)=0 \text{ and }
    \|\Pi\bv\|_\bV \leq C_\Pi \|\bv\| \quad {\color{black} \forall} \bu_h \in \bU_h, \bv\in \bV .
\end{align}
The discrete trial-to-test operator $\Theta_{h}:\bU_h \rightarrow V_{h}$ is defined through
$$
\left(\Theta_{h} \bfw_{h}, \bfv_{h}\right)_{\bV}=b\left(\bfw_{h} , \bfv_{h} \right) \text { for all } \bfv_{h} \in \bV_{h} 
$$
and the practical DPG formulation seeks $\bfu_h \in \bU_h$ such that {\color{black}\
 \begin{align}\label{eq:dpgh}
    b(\bfu_h,\Theta_h \bfw_h) = l(\Theta_h \bfw_h)\quad \forall  \bfw_h\in \bU_h. 
\end{align}
}
With respect to the simplicial triangulation $\mathcal T$, our focus is on approximations within the spaces of piecewise polynomials of degree at most  $k \in \mathbb{N}_{0}$, given by
\begin{align}
P^{k}(T)&=\left\{v \in L^{\infty}(T) \mid v \text { is polynomial on } T \text { of degree } \leq k\right\}, \\
P^{k}(\mathcal{T}) &=\left\{v_{\mathcal T} \in L^{\infty}(\Omega)\left|\forall T \in \mathcal{T}, v_{\mathcal T}\right|_{T} \in P^{k}(T)\right\},\\
P^{k}(\T;\mathbb M) &= (P^k(\T))^{d\times d}, \quad
P^k(\T;\mathbb V) = (P^k(\T))^d,  \\
&\!\!\!\!\!\!\!\!\!\!\!\!\!\!\!\!\!\!\!\!\!\!\! {\color{black} P^k(\T; \mathbb S) = \{ v \in P^k(\T;\mathbb M)\ : \ v = v^T\},  \quad P^k(\T; \mathbb A) =\{v\in P^k(\T;\mathbb M) \ :\ v = - v^T\} } \ .
\end{align}
For the approximation of the trace variables, $\mathcal E$ denotes the set of all sides in the triangulation and we define the approximation spaces
\begin{align}
P^{k}(\partial T) &= \{v \in L^\infty(\partial T): v_{|F} \in P^k(F) \ \text{for all (d-1)-dimensional subsimplices $F$ of $T$}  \}, \\
{\color{black} 
S^{k}_{\Gamma_D}(\E;\mathbb V)} &= 
\{
v\in L^\infty(\partial \T;\mathbb V) 
: v|_{\partial T} \in (P_k(\partial\T))^d \cap C^0(\partial T) \
\forall T \in \T
\}  \
\cap \ H^1_{\Gamma_D}(\Omega)
,   \\  
P^k_{\Gamma_N}(\E;\mathbb V)  &= \{v\in L^\infty(\partial \T;\mathbb V) : v|_{\partial T} \in (P_k(\partial\T))^d\ \forall T \in \T, \  \text{for all edges}\ E \subset \Gamma_N: \ v(E) =0 \}.
\end{align}
The DPG formulation \eqref{eq:dpgh} allows for a natural choice of piecewise polynomial trial spaces with
\begin{subequations}
\begin{align}
    \bU_{h}^{k,j}  &:= P^{k}(\T;\mathbb M) \times P^{k+j}(\T;\mathbb V) \times
       {\color{black}P^k_{\Gamma_N}(\E;\mathbb V)\times S^{k+1}_{\Gamma_D}(\E;\mathbb V)}, \\
    \bV_h^k &:= P^{k+2} (\T;\mathbb S) \times P^{k+d} (\T;\mathbb V) \times P^k(\T;\mathbb A) 
\end{align}
\end{subequations}
where $k \geq 0$ and $j=0,1$. Note that the original formulation in \cite{GQ14} concerns only $j=0$, 
i.e. approximations of $u$ and the stress tensor {\color{black}$\sigma$} in polynomial spaces of the same order. 
However, recalling the constitutive law $\mathcal A \sigma = \varepsilon(u)$, this can appear 
suboptimal and we will show that a better convergence rate for $j=1$ can be obtained in {\color{black}Section} 
\ref{sec:superconvergence}. {\color{black} Additionally $U_h^{k,j}$ denotes the second component of $\bU_{h}^{k,j}$.}
The next lemma recalls the definition of the Fortin operator $\Pi$. 
\begin{lemma}[Fortin operator]
Let $k\in \mathbb N_0$, $j=0,1$, $\Pi_k : L^2(\Omega) \rightarrow P^k(\T)$ denote the
$L^2$ projection and $\Pi_k^{\div,\mathbb S} : H(\div, \T;\mathbb S) \rightarrow P^k(\T;\mathbb S)$ denotes
the symmetric divergence projection \cite[Lemma 4.1, Theorem 3.6]{GQ14,GZ11} such that
for all $\tau \in H(\div,\T;\mathbb S)$:
\begin{subequations}\label{eq:div-sym}
\begin{align}
    \Vert \Pi_{k+2}^{\div,\mathbb S} \tau \Vert_{H(\div,\T)} &\leq C \Vert\tau\Vert_{H(\div, \T)}, \\
    (\tau-\Pi_{k+2}^{\div,\mathbb S}  \tau,\tau_h) &= 0 \quad\ \forall \tau_h \in P^k(\T;\mathbb S) \label{eq:approx_Pi_div},\\
     \langle(\tau -\Pi_{k+2}^{\div,\mathbb S}  \tau ) \cdot n, \mu_h\rangle_{\partial \T} &= 0,
    \quad\forall \mu_h \in P^{k+1}(\partial \T),\\
    \text{ and }
 \div_\T \Pi_{k+1}^{\div,\mathbb S} \tau &= \Pi_k \div_\T \tau. \label{eq:commu_Pi_div}
\end{align}
\end{subequations}
Moreover, let $\Pi^{\mathrm{grad}}_{k+d}: H^1(\T) \rightarrow P^{k+d}(\T)$ {\color{black}be} such that
\begin{align*}
    \Vert \Pi_{k+d}^{\mathrm{grad}} \Vert_{H^1(\T)} &\leq C \Vert u \Vert_{H^{1}(\T)},\\
    (u-\Pi^{\mathrm{grad}}_{k+d} u,u_h) &= 0 \quad \forall u_h \in P^{k-1}(\T),\\
    \text{ and }
    \langle u-\Pi^{\mathrm{grad}}_{k+d} u,\mu_h) &= 0 \quad \forall \mu_h \in P^{k}(\partial\T)
\end{align*} 
for all {$u \in H^1(\T)$}.
The operator $\Pi (\tau,v,q) := (\Pi^{\div,\mathbb S}_{k+2} \tau,
\Pi^{\mathrm{grad}}_{k+d} v, \Pi^k q)$ is a Fortin operator, i.e. 
\begin{align*}
 b(\bu_h,\bv-\Pi\bv)=0 
 \end{align*} 
 and $\|\Pi\bv\|_\bV \leq C_\Pi \|\bv\| \quad \text{for all } \bu_h \in \bU_h^{k,j}, \bv\in \bV $.
\end{lemma}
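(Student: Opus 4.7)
The plan is to verify the two Fortin properties separately. The continuity estimate $\|\Pi\bv\|_\bV \leq C_\Pi \|\bv\|_\bV$ follows by combining the stated $H(\div)$-stability of $\Pi^{\div,\mathbb S}_{k+2}$, the $H^1$-stability of $\Pi^{\mathrm{grad}}_{k+d}$ (which bounds both $\|\nabla_\T \Pi^{\mathrm{grad}}_{k+d} v\|$ and $\|\Pi^{\mathrm{grad}}_{k+d} v\|$), and the elementary $L^2$-stability of the componentwise projection $\Pi^k$ on $q$. This part is essentially bookkeeping against the definition of $\|\cdot\|_\bV$.

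The core of the proof is to expand $b(\bu_h,\bv - \Pi\bv)$ into its six contributions and to verify each vanishes for any $\bu_h = (\sigma_h,u_h,\hat\sigma_{n,h},\hat u_h) \in \bU_h^{k,j}$ and $\bv = (\tau,v,q) \in \bV$. First I would handle $(\mathcal A\sigma_h, \tau - \Pi^{\div,\mathbb S}_{k+2}\tau)$: since $\tau$ and $\Pi^{\div,\mathbb S}_{k+2}\tau$ are both symmetric, the difference is symmetric, so the decomposition $\mathcal A\sigma_h = \mathcal A(\mathrm{sym}(\sigma_h)) + \mathrm{as}(\sigma_h)$ kills the antisymmetric part by Frobenius sym--skew orthogonality, while the symmetric part lies in $P^k(\T;\mathbb S)$ where \eqref{eq:approx_Pi_div} applies. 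Next, $(u_h,\div_\T(\tau - \Pi^{\div,\mathbb S}_{k+2}\tau))$ reduces by the commutation property to $(u_h,\div_\T\tau - \Pi_{k+1}\div_\T\tau)$, which vanishes since $u_h \in P^{k+j}(\T;\mathbb V) \subset P^{k+1}(\T;\mathbb V)$ for $j \leq 1$. The term $(\sigma_h,\nabla_\T(v - \Pi^{\mathrm{grad}}_{k+d}v))$ is handled by piecewise integration by parts: the interior contribution pairs $\div_\T\sigma_h \in P^{k-1}(\T;\mathbb V)$ against a difference that is orthogonal to $P^{k-1}$, and the resulting boundary contribution pairs $\sigma_h\cdot n \in P^k(\partial\T;\mathbb V)$ against a boundary difference that is orthogonal to $P^k(\partial\T)$. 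The remaining three terms are immediate: $(\sigma_h, q - \Pi^k q) = 0$ by componentwise $L^2$-orthogonality (the antisymmetry of $q$ only picks out $\mathrm{as}(\sigma_h) \in P^k(\T;\mathbb A)$), $\langle \hat u_h,(\tau - \Pi^{\div,\mathbb S}_{k+2}\tau)\cdot\nu\rangle_{\partial\T} = 0$ since $\hat u_h$ is piecewise $P^{k+1}$ on faces matching the normal-trace orthogonality of $\Pi^{\div,\mathbb S}_{k+2}$, and $\langle \hat\sigma_{n,h}, v - \Pi^{\mathrm{grad}}_{k+d}v\rangle_{\partial\T} = 0$ since $\hat\sigma_{n,h}$ is piecewise $P^k$ on faces.

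The main obstacle, in my view, is the careful matching of polynomial degrees across the six contributions: one must simultaneously satisfy the interior $P^{k-1}$ orthogonality demanded by $\div_\T\sigma_h$, the boundary $P^k$ orthogonality demanded by $\sigma_h\cdot n$ and $\hat\sigma_{n,h}$, the boundary $P^{k+1}$ orthogonality demanded by $\hat u_h$ and the interior $P^{k+1}$ orthogonality demanded by $u_h$ in the enlarged case $j=1$. These requirements are precisely what forces the test-space degrees $(k+2,k+d)$ in the definition of $\bV_h^k$. An additional subtlety worth highlighting is that the intrinsically symmetric projector $\Pi^{\div,\mathbb S}_{k+2}$ must be tested against a possibly non-symmetric trial stress $\sigma_h \in P^k(\T;\mathbb M)$, which the sym--skew split dispatches cleanly. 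Once all six contributions vanish, the orthogonality $b(\bu_h, \bv - \Pi\bv) = 0$ follows and the lemma is established.
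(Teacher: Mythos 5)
Your proposal is correct and, for the one term actually affected by the enlarged displacement space ($j=1$), uses exactly the paper's argument: the commutation $\div_\T\Pi^{\div,\mathbb S}_{k+2}\tau=\Pi_{k+1}\div_\T\tau$ together with $u_h\in P^{k+1}(\T;\mathbb V)$ kills $(u_h,\div_\T(\tau-\Pi^{\div,\mathbb S}_{k+2}\tau))$. The only difference is that you verify all six contributions (including the sym--skew splitting of $\sigma_h$ and the elementwise integration by parts) from scratch, whereas the paper delegates the unchanged $j=0$ terms to the cited Fortin lemma of Gopalakrishnan--Qiu and checks only the modified one; your version is a harmless, self-contained elaboration of the same proof.
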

\begin{proof}
The case $j=0$ is covered by  \cite[Lemma 4.1]{GQ14}. {\color{black}For second case $j=1$ we want to show that }
\begin{align}
    b({\bfu_h},{\bfv}-\Pi {\bfv}) &= (\mathcal A\sigma_h,
    \tau-\Pi^{\div,\mathbb S}_{k+2} \tau)  
    + (u_h,\div_\T (\tau-\Pi^{\div,\mathbb S}_{k+2} \tau))
    + (\sigma_h, \nabla_\T (v-\Pi^{\mathrm{grad}}_{k+d} v)) +(\sigma_h,q-\Pi^k q)\notag \\& \quad
    - \langle \hat u_h, (\tau-\Pi^{\div,\mathbb S}_{k+2} \tau) \cdot\nu\rangle_{\partial\T} - \langle\hat\sigma_{h},v-\Pi^{\mathrm{grad}}_{k+d} v\rangle_{\partial\T}
\end{align}
{\color{black}is equal to zero.}
Note that only the second term is modified by the change of the space. The commutative property of $\Pi^{\div,\mathbb S}_{k}$, 
the approximation property of $\Pi_k$ and the fact that $u_h$ belongs to $P^{k+1}(\T,\mathbb V)$ yields to
\begin{align*}
    (u_h, \div_\T(\tau -\Pi^{\div,\mathbb S}_{k+2} \tau)) 
    = (u_h, \div_\T\tau - \Pi_{k+1}(\div_\T\tau))
    = 0 \quad \forall \tau \in H(\div,\T;\mathbb S)
\end{align*}
\end{proof}
The existence of the Fortin operator {\color{black}immediately} leads to the quasioptimality result which we state in the next theorem.
\begin{theorem}[Quasioptimality]
    Let $\bu=(\sigma,u,\hat\sigma_{n},\hat u) \in \bU$ be the solution to continuous formulation \eqref{eq:dpg}. For $k\geq 0$ and $j=0,1$ let  
    $\bu_h=(\sigma_h,u_h,\hat\sigma_{n,h},\hat u_h) \in \bU_h^{k+j}$ be the solution to the discrete DPG formulation \eqref{eq:dpgh}.
   % \begin{align*}
%        b((\sigma,u,\hat\sigma_n,\hat u),(\tau,v,q)) = l(v)\quad \forall (\tau,v,q)\in V
%    \end{align*}
%    has a unique solution $(\sigma,u,\hat\sigma_n,\hat u) \in U$ and practical formulation
  %  \begin{align*}
  %      b((\sigma_h,u_h,\hat\sigma_{n,h},\hat u_h),(\tau_h,v_h,q_h)) = l(v_h)\quad \forall (\tau_h,v_h,q_h)\in V_h
  %  \end{align*}
   % has a unique solution $(\sigma_h,u_h,\hat\sigma_{n,h},\hat u_h) \in U_h$. Moreover
   Then, the following quasioptimality result holds
    \begin{align*}
        \Vert\sigma-\sigma_h\Vert_{L^2(\Omega)}
        &+\Vert u-u_h\Vert_{L^2(\Omega)}
        +\Vert\hat\sigma_n-\hat\sigma_{n,h}\Vert_{H^{-1/2}(\partial\T)}
        +\Vert \hat u-\hat u_h\Vert_{H^{1/2}(\partial\T)} \\
        & \leq C \min_{\rho_h,w_h,\hat\rho_h,\hat w_h\in \bU_h} 
        \Vert\sigma-\rho_h\Vert_{L^2(\Omega)}
        +\Vert u-w_h\Vert_{L^2(\Omega)}
        +\Vert\hat\sigma_n-\hat\rho_h\Vert_{H^{-1/2}(\partial\T)}
        +\Vert \hat u-\hat w_h\Vert_{H^{1/2}(\partial\T)} .
    \end{align*} 
\end{theorem}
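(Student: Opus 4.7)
The plan is to follow the standard DPG quasioptimality recipe (compare \cite{GQ14}): use the Fortin operator from the previous lemma to promote the continuous inf-sup \eqref{ass:II} into a discrete one over $\bV_h^k$, then combine this with Galerkin orthogonality and the continuity \eqref{ass:III} of $b$. The well-posedness of \eqref{eq:dpg} guaranteed by \eqref{ass:I}--\eqref{ass:III} yields, by the Banach closed range theorem, the dual trial-side bound $\Vert \bw\Vert_\bU \lesssim \sup_{\bv\in \bV\setminus\{0\}} b(\bw,\bv)/\Vert \bv\Vert_\bV$ for all $\bw\in\bU$. Applying this to $\bu_h\in\bU_h^{k,j}$ and using the Fortin identity $b(\bu_h,\bv-\Pi\bv)=0$ together with $\Vert\Pi\bv\Vert_\bV\le C_\Pi\Vert\bv\Vert_\bV$, the supremum over $\bV$ is replaced by one over $\bV_h^k$, yielding the discrete inf-sup
\[\Vert \bu_h\Vert_\bU \;\lesssim\; \sup_{\bv_h\in\bV_h^k\setminus\{0\}}\frac{b(\bu_h,\bv_h)}{\Vert\bv_h\Vert_\bV}\;=\;\Vert\Theta_h\bu_h\Vert_\bV.\]

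Since $\Theta_h\bw_h\in\bV_h^k\subset\bV$ is an admissible test in \eqref{eq:dpg}, subtracting \eqref{eq:dpgh} from \eqref{eq:dpg} at $\bv=\Theta_h\bw_h$ produces Galerkin orthogonality $b(\bu-\bu_h,\Theta_h\bw_h)=0$ for every $\bw_h\in\bU_h^{k,j}$. For arbitrary $\bw_h\in\bU_h^{k,j}$, setting $\bz_h:=\bu_h-\bw_h$ and using the defining identity $\Vert\Theta_h\bz_h\Vert_\bV^2=b(\bz_h,\Theta_h\bz_h)$ from \eqref{eq:theta} together with Galerkin orthogonality and \eqref{ass:III}, I obtain
\begin{align*}
\Vert\bz_h\Vert_\bU^2 &\lesssim \Vert\Theta_h\bz_h\Vert_\bV^2 = b(\bz_h,\Theta_h\bz_h) = b(\bu-\bw_h,\Theta_h\bz_h) \\
&\lesssim \Vert\bu-\bw_h\Vert_\bU\,\Vert\Theta_h\bz_h\Vert_\bV \lesssim \Vert\bu-\bw_h\Vert_\bU\,\Vert\bz_h\Vert_\bU,
\end{align*}
where the last step invokes $\Vert\Theta_h\bz_h\Vert_\bV\lesssim\Vert\bz_h\Vert_\bU$, which is immediate from continuity of $b$ and the definition of $\Theta_h$. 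Dividing by $\Vert\bz_h\Vert_\bU$ and applying the triangle inequality $\Vert\bu-\bu_h\Vert_\bU\le\Vert\bu-\bw_h\Vert_\bU+\Vert\bz_h\Vert_\bU$, followed by taking the infimum over $\bw_h\in\bU_h^{k,j}$, produces the claimed best-approximation bound in the $\bU$-norm, and the four individual norms on the left are each dominated by $\Vert \bu-\bu_h\Vert_\bU$.

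The only conceptual subtlety is that the image $\{\Theta_h\bw_h:\bw_h\in\bU_h^{k,j}\}$ of the trial-to-test map is generally a proper subspace of $\bV_h^k$, so one cannot naively invoke Galerkin orthogonality against arbitrary $\bv_h\in\bV_h^k$; the resolution, used implicitly above, is that the specific test function needed, $\Theta_h\bz_h$, lies in this image by construction and simultaneously realises the discrete inf-sup up to a constant. The hidden constants ultimately depend only on the continuous inf-sup constant, the continuity constant of $b$, and the Fortin constant $C_\Pi$, all of which are robust with respect to $\lambda\to\infty$ in view of the regularity remark at the beginning of the section.
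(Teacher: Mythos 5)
Your argument is correct and is essentially the paper's proof: the paper simply cites the standard DPG quasioptimality result of Gopalakrishnan--Qiu (Theorem 4.2 in \cite{GQ14}), whose content is exactly the chain you spell out --- Fortin operator $\Rightarrow$ discrete inf-sup $\Rightarrow$ Galerkin orthogonality against $\Theta_h\bz_h$ $\Rightarrow$ continuity and triangle inequality. The only caveat is your closing claim that the constant is robust as $\lambda\to\infty$ ``in view of the regularity remark'': that remark only addresses the constant $C_2$ in the stress--strain bound, not the inf-sup or Fortin constants, and the theorem as stated does not require such robustness anyway.
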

{\color{black}The proof follows from the quasioptimality result \cite[Theorem 4.2]{GQ14}.}

\section{Improved a priori convergence}\label{sec:apriori}
This section aims at improving the existing a priori analysis from \cite{BDGQ12} to require only minimal regularity of the solution $u$ to obtain optimal convergence rates.
In fact, in the contribution \cite{BDGQ12} the best approximation error in the third component (for the traces of the stress tensor)
is bounded by the best approximation error of the {\color{black}exact stress} in the $H(\div,\mathbb M)$-norm,
{\color{black}
approximated in the space 
\begin{align}
    RT^{k}_{\Gamma_N}(\T) = \{
    \tau \in H_{\Gamma_N}(\div, \Omega;\mathbb M)
    \ : \ \tau_{|_T} = 
    \tau_1+
    \tau_2 {\mathbf{x}}^\top, 
    \tau_1 \in P^k(\T;\mathbb M), \ 
    \tau_2 \in P^k(\T;\mathbb V)
    \ \forall T \in \mathcal T
    \} \ .
\end{align}
}
To estimate our solution we now introduce the corresponding interpolants.  
\begin{definition}
Let $\Pi^{\div}_{k} : H_{\Gamma_N}(\div, \Omega;\mathbb M) \cap {\color{black}H^{r+1}(\T;\mathbb M)} \rightarrow
RT^{k}_{\Gamma_N}(\T)$ denote the {\color{black}componentwise} Raviart-Thomas interpolant such that
\begin{align*}
    \Vert \tau - \Pi^{\div}_{k} \tau\Vert \leq C_k h^{r+1} \vert\tau\vert_{H^{r+1}(\T)} \quad \mathrm{for}\ r \in [0,k]
\end{align*}
and $\div \Pi^{\div}_{k} \tau = \Pi_k \div\tau$
holds for all $\tau \in H(\div, \Omega;\mathbb M) \cap H^1(\T)$ (see
\cite[Theorem 3]{MR0483555} or \cite[Theorem 16.4]{MR4242224}).
Let $\Pi^\nabla_{k+1}:H^1_{\Gamma_D}(\Omega) \rightarrow S_{\Gamma_D}^{k+1}(\T)$ denote the Scott-Zhang interpolant \cite{SZ90} such 
that 
\begin{align}
    \Vert v - \Pi^\nabla_{k+1} v \Vert \leq C_{k+1} h^{k+1} \Vert v\Vert_{H^{k+2}(\Omega)}. \label{eq:scott-zhang}
\end{align}
\end{definition}
The error estimate of the $H(\div)$ trace, therefore, relies on the term  $||(1 - \Pi_k^{\div} )
\div \sigma ||$. For it to converge when the mesh size reduces, some regularity of 
$\text{div }\sigma$ has to be assumed. Due to \eqref{eq:momentum} this means regularity assumptions on $f$.
An alternative way to estimate the best approximation error corresponding to the trace component was presented in \cite{F18}. 
There the commutativity of $\Pi^{\div}_k$ and its approximation property lead directly to an estimate for $\Vert\hat\sigma_n\Vert_{H^{-1/2}(\partial \T)}$
without considering the original $||(1 - \Pi_k^{\div} )\div \sigma ||$ term. 
In the next theorem, we extend \cite[Theorem 5, Corollary 6]{F18} {\color{black}resp. \cite[Theorem 6]{ultraweak-duality}} to our DPG formulation.
This will be particularly useful to extend our duality argument to more general 
regularity assumptions. {\color{black} Note that the result in \cite[Theorem 5, Corollary 6]{F18} also {\color{black}applies} in the case of reduced regularity.}
\begin{theorem}\label{thm:rates} Let $\bfw = (\chi, w, \gamma_n \chi,
    \gamma_0 w)\in \bU$ with $w \in
    H^{k+2}(\Omega;\mathbb V)$ and $\chi \in H^{k+1}(\T; \mathbb M) \cap
    H(\div,\Omega;\mathbb M)$.  
For $k\in\mathbb N_0$ and $j=0,1$, the best approximation $\bw_h \in \Uh$ satisfies
    \begin{align}\label{improvedregularity}
        %\min_{\bw_h\in \Uh} 
        \Vert \bfw - \bfw_h\Vert_{\bU}
         \leq C h^{k+1} (\Vert w\Vert_{H^{k+2}(\Omega)} 
         + \Vert\chi\Vert_{H^{k+1}(\T)}).
    \end{align}
\end{theorem}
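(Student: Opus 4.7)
My plan is to bound each of the four components of $\|\bw-\bw_h\|_{\bU}$ by exhibiting explicit approximants in $\bU_h^{k,j}$ and summing the resulting errors. For the two $L^2$-field components I take the $L^2$-projections of $\chi$ and $w$ onto $P^k(\T;\mathbb M)$ and $P^{k+j}(\T;\mathbb V)$, yielding $Ch^{k+1}|\chi|_{H^{k+1}(\T)}$ and $Ch^{k+1}|w|_{H^{k+1}(\Omega)}$ respectively by Bramble--Hilbert. For the $H^{1/2}(\partial\T)$-trace of $w$, I take $\hat w_h:=\gamma_0\Pi^{\nabla}_{k+1} w\in S^{k+1}_{\Gamma_D}(\E;\mathbb V)$; since the $H^{1/2}(\partial\T)$-norm is dominated by the $H^1(\Omega)$-norm of any extension, the Scott--Zhang estimate (applied in $H^1$) yields $\|\gamma_0(w-\Pi^{\nabla}_{k+1} w)\|_{H^{1/2}(\partial\T)}\leq Ch^{k+1}\|w\|_{H^{k+2}(\Omega)}$.

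The main obstacle is the $H^{-1/2}(\partial\T)$-error in the normal trace of $\chi$, which is precisely the difficulty highlighted before the statement. I pick $\hat\chi_h := \gamma_n(\Pi^{\div}_k\chi)\in P^k_{\Gamma_N}(\E;\mathbb V)$ and invoke the dual characterization
\begin{equation*}
\|\gamma_n(\chi-\Pi^{\div}_k\chi)\|_{H^{-1/2}(\partial\T)} = \sup_{v\in H^1_{\Gamma_D}(\Omega)\setminus\{0\}}\frac{\sum_{T\in\T}\langle(\chi-\Pi^{\div}_k\chi)\cdot n,v\rangle_{\partial T}}{\|v\|_{H^1(\Omega)}}.
\end{equation*}
Elementwise integration by parts combined with the commutativity $\div\Pi^{\div}_k\chi=\Pi_k\div\chi$ rewrites the numerator as
\begin{equation*}
\bigl(\div\chi-\Pi_k\div\chi,\, v\bigr) + \bigl(\chi-\Pi^{\div}_k\chi,\, \nabla v\bigr).
\end{equation*}
The second term is bounded by $Ch^{k+1}|\chi|_{H^{k+1}(\T)}\|\nabla v\|$ via the approximation property of $\Pi^{\div}_k$ stated in the definition above.

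The first term is where the gain over the naive $H(\div)$-estimate must be extracted: the bound $\|(1-\Pi_k)\div\chi\|\leq C\|\div\chi\|$ would yield only $O(h)$. Instead I exploit $L^2$-orthogonality with the piecewise $L^2$-projection $\Pi_k v\in P^k(\T)$:
\begin{equation*}
\bigl(\div\chi-\Pi_k\div\chi,\, v\bigr) = \bigl(\div\chi-\Pi_k\div\chi,\, v-\Pi_k v\bigr)\leq \|(1-\Pi_k)\div\chi\|\cdot \|v-\Pi_k v\|.
\end{equation*}
The second factor is $O(h)\|v\|_{H^1}$ since only $v\in H^1(\Omega)$ is available. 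The compensating $h^k$ factor comes from the observation that $\chi\in H^{k+1}(\T)\cap H(\div,\Omega)$ forces $\div\chi\in H^k(\T)$ elementwise: on each $T$ the distributional divergence agrees with the classical, piecewise-smooth $\div(\chi|_T)$, so $\|(1-\Pi_k)\div\chi\|\leq Ch^k|\chi|_{H^{k+1}(\T)}$. Multiplying the two factors gives $Ch^{k+1}|\chi|_{H^{k+1}(\T)}\|v\|_{H^1}$; assembling all four component estimates yields the asserted bound.
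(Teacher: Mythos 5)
Your overall strategy --- componentwise best approximation, with the $H^{-1/2}(\partial\T)$ normal-trace component handled by duality, the commutativity $\div\Pi^{\div}_k\chi=\Pi_k\div\chi$, and a double orthogonality that trades $\|(1-\Pi_k)\div\chi\|$ against $\|v-\Pi_k v\|$ --- is exactly the right one; the paper itself disposes of this theorem by citing the componentwise application of Theorem 6 of \cite{ultraweak-duality}, which runs along these same lines. The estimates for the two $L^2$ components and for the $H^{1/2}(\partial\T)$ trace are fine.

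There is, however, a genuine error in the key step: the dual characterization of $\|\gamma_n(\chi-\Pi^{\div}_k\chi)\|_{H^{-1/2}(\partial\T)}$ must take the supremum over the \emph{broken} space $H^1(\T;\mathbb V)$ with the broken norm $\|v\|_{H^1(\T)}$, not over the conforming space $H^1_{\Gamma_D}(\Omega;\mathbb V)$. As written, your supremum is identically zero: both $\chi$ and $\Pi^{\div}_k\chi$ belong to $H_{\Gamma_N}(\div,\Omega;\mathbb M)$, so $\gamma_n(\chi-\Pi^{\div}_k\chi)\in H^{-1/2}_{\Gamma_N}(\partial\T)$, and by Lemma \ref{lem:traces}(1) the pairing $\langle\gamma_n(\chi-\Pi^{\div}_k\chi),v\rangle_{\partial\T}$ vanishes for every conforming $v\in H^1_{\Gamma_D}(\Omega;\mathbb V)$: the contributions from interior faces cancel and the boundary terms vanish by the boundary conditions. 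Hence the claimed identity cannot hold, and testing against conforming $v$ gives no control of the norm at all. The repair is routine: the correct duality (the ``elementary characterisation'' of \cite{Maxwell} recalled in the introduction) is the supremum over $v\in H^1(\T;\mathbb V)$, and every subsequent step of your computation --- elementwise integration by parts, the identity $((1-\Pi_k)\div\chi,\,v)=((1-\Pi_k)\div\chi,\,v-\Pi_k v)$, the bounds $\|v-\Pi_k v\|\lesssim h\|\nabla_\T v\|$ and $\|(1-\Pi_k)\div\chi\|\lesssim h^k|\chi|_{H^{k+1}(\T)}$ --- goes through verbatim with $\nabla_\T$ and $\|v\|_{H^1(\T)}$ in place of $\nabla$ and $\|v\|_{H^1(\Omega)}$. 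With that single correction the argument is complete and yields the asserted $O(h^{k+1})$ bound.
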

\begin{proof}
     {\color{black}The proof is just the componentwise application of Theorem 6 from \cite{ultraweak-duality}.}
\end{proof}

\section{Distance of $||u-u_h||$ to $U_h^{k,j}$}
With the improved a priori error estimates of the previous section, we now employ a duality argument to show that the error 
$||u-u_h||$ is almost orthogonal to any $g\in U_h^{k,j}$. To this aim, we derive a representation of the solution to the adjoint problem in the following lemma.
\begin{lemma}[Representation of the solution to the adjoint problem]\label{lem:repres} 
Let $g\in L^2(\Omega)$ be and \\ $\bfv := (\tau, v,q)
    \in H_{\Gamma_N}(\div, \Omega;\mathbb S)\times  H^1_{\Gamma_D}(\Omega;\mathbb
    V)\times L^2({\color{black}\Omega ;}\mathbb A)$ is the solution to the adjoint problem
\begin{subequations}
\begin{align}
    \mathcal{A} \tau + \nabla v +q &= 0  &&\mathrm{in}\ \Omega,\\
    \div \tau &= g  && \mathrm{in}\ \Omega, \\
    v &= 0  &&\mathrm{on}\ \Gamma_D, \\
    \tau \cdot \nu &= 0 &&\mathrm{on}\ \Gamma_N.
\end{align}
\label{eq:adj}    
\end{subequations}
Then, there exits a unique element $ \bfw = \Theta^{-1}  \bfv$ of $\bU $ such that 
    \begin{align*}
        &(\bfv, \bfz)_{\bV} = b(\bfw, \bfz)\quad \forall \bfz \in \bV,\\
        &\bfw = (-q,g,-\gamma_n \tau, 0) + (\sigma^*, u^*, \gamma_n \sigma^*, \gamma_0 u^*),
    \end{align*}
    with  $(\sigma^*,
    u^*)\in H_{\Gamma_N}(\div, \Omega;\mathbb M) \times H^1_{\Gamma_D}(\Omega;\mathbb V)$ the solution to the problem 
    \begin{subequations}
    \label{eq:adjointelasticity}
    \begin{align}
        \mathcal A \sigma^* - \varepsilon(u^*)   &= \tau
         && \mathrm{in}\  \Omega,\\ 
        \label{eq:momentums}
        -\div \sigma &= \div \mathcal A\tau + v && \mathrm{in} \  \Omega ,\\
        u &= 0 && \mathrm{on}\  \Gamma_D,\\
        \sigma \cdot \nu &= 0 && \mathrm{on}\  \Gamma_N .
    \end{align}
\end{subequations}
Moreover, it holds
    \begin{align*}
        \Vert v\Vert_{H^2(\Omega)} + \Vert\tau\Vert_{H^1(\T)} + \Vert q\Vert
        + \Vert u^*\Vert_{H^2(\Omega)} + \Vert \sigma^*\Vert_{H^1(\T)} 
        \leq C \Vert g\Vert. 
    \end{align*}
\end{lemma}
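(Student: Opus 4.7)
The plan is to prove the decomposition in four steps. First, I would establish well-posedness and regularity of the first adjoint problem \eqref{eq:adj}: it is a dual-mixed linear elasticity system driven by $g\in L^2(\Omega)$, with $q$ playing the role of the Lagrange multiplier that enforces symmetry of $\tau$. The regularity assumption \eqref{eq:regularity} applied to the associated displacement formulation produces $\tau\in H^1(\T;\mathbb S)$, $v\in H^2(\Omega;\mathbb V)$, $q\in L^2(\Omega;\mathbb A)$ with the bound $\Vert v\Vert_{H^2(\Omega)}+\Vert\tau\Vert_{H^1(\T)}+\Vert q\Vert \le C\Vert g\Vert$.

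Second, I would invoke \eqref{eq:regularity} a second time on the system \eqref{eq:adjointelasticity} for $(\sigma^*,u^*)$. Its data come from Step 1: the right-hand side of the constitutive law is $\tau\in L^2(\Omega;\mathbb S)$, and the right-hand side of the equilibrium equation is $\div(\mathcal A\tau)+v$, where $\div(\mathcal A\tau)$ is made sense of through the identity $\mathcal A\tau=-\nabla v-q$ delivered by \eqref{eq:adj}. This yields $(\sigma^*,u^*)\in H_{\Gamma_N}(\div,\Omega;\mathbb S)\times H^1_{\Gamma_D}(\Omega;\mathbb V)$ with $\Vert u^*\Vert_{H^2(\Omega)}+\Vert\sigma^*\Vert_{H^1(\T)}\le C\Vert g\Vert$.

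Third, existence and uniqueness of $\bfw=\Theta^{-1}\bfv$ are a direct consequence of the abstract well-posedness of Section 1, since $\Theta$ is an isomorphism from $\bU$ to $\bV$. To establish the explicit decomposition, I would set $\bfw_1:=(-q,g,-\gamma_n\tau,0)$ and $\bfw_2:=(\sigma^*,u^*,\gamma_n\sigma^*,\gamma_0 u^*)$ and verify directly that $b(\bfw_1+\bfw_2,\bfz)=(\bfv,\bfz)_{\bV}$ for every $\bfz=(\tau',v',q')\in\bV$. The key tools are the element-wise integration-by-parts identities
\begin{align*}
\langle\gamma_n\tau,v'\rangle_{\partial\T} &= (\tau,\nabla_\T v')+(\div\tau,v'),\\
\langle\gamma_0 u^*,\tau'\cdot\nu\rangle_{\partial\T} &= (u^*,\div_\T\tau')+(\nabla u^*,\tau'),\\
\langle\gamma_n\sigma^*,v'\rangle_{\partial\T} &= (\sigma^*,\nabla_\T v')+(\div\sigma^*,v'),
\end{align*}
combined with symmetry of $\tau,\tau',\sigma^*$, antisymmetry of $q,q'$, and the PDE identities from \eqref{eq:adj} (namely $\nabla v=-\mathcal A\tau-q$ and $\div\tau=g$) and from \eqref{eq:adjointelasticity} (namely $\mathcal A\sigma^*-\varepsilon(u^*)=\tau$ and $-\div\sigma^*=\div\mathcal A\tau+v$). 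Summing the estimates of Steps 1 and 2 then delivers the claimed regularity bound.

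The main obstacle is the bookkeeping in Step 3: all interior-face contributions generated by the broken integration-by-parts identities must cancel exactly against the skeleton components of $\bfw_1$ and $\bfw_2$, while the mixed pairing $(\sigma_w,q')$ in $b$ must be traced through the symmetric/antisymmetric splitting and matched with $(q,q')$ coming from the $\bV$-inner product. A secondary technical point is ensuring that $\div(\mathcal A\tau)\in L^2(\Omega;\mathbb V)$ so that \eqref{eq:adjointelasticity} is interpretable in the strong sense; this rests on the identification $\mathcal A\tau=-\nabla v-q$ from Step 1 together with the $H^2$-regularity of $v$.
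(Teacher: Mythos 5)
Your overall route coincides with the paper's own proof: both arguments verify the representation by expanding the inner product $(\bfv,\bfz)_{\bV}$, substituting the identities $\div\tau=g$ and $\nabla v=-\mathcal A\tau-q$ from \eqref{eq:adj}, integrating by parts elementwise, and absorbing the leftover volume terms $(\div\mathcal A\tau+v,\lambda)+(\tau,\mu)$ into the auxiliary elasticity system \eqref{eq:adjointelasticity}; the final bound is likewise obtained by applying the regularity assumption \eqref{eq:regularity} to each of the two systems in turn. So there is no methodological divergence to report.

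There is, however, one concrete point at which your Step 3 would not close as written, and it sits exactly in the part you deferred as ``bookkeeping.'' The term that must be integrated by parts is $(\nabla v,\nabla_\T\lambda)=-(\mathcal A\tau,\nabla_\T\lambda)-(q,\nabla_\T\lambda)$; since it is $\mathcal A\tau$ that hits $\nabla_\T\lambda$, the skeleton contribution that emerges is $\langle\gamma_n(\mathcal A\tau),\lambda\rangle_{\partial\T}$ with companion volume term $(\div\mathcal A\tau,\lambda)$ --- which is precisely why the load of \eqref{eq:adjointelasticity} is $\div\mathcal A\tau+v$. Your choice $\bfw_1=(-q,g,-\gamma_n\tau,0)$, combined with your identity $\langle\gamma_n\tau,\lambda\rangle_{\partial\T}=(\tau,\nabla_\T\lambda)+(\div\tau,\lambda)$, instead produces a volume term $(\tau,\nabla_\T\lambda)$ that nothing in $(\bfv,\bfz)_{\bV}$ or in $b(\bfw_2,\bfz)$ can cancel. (The lemma statement itself writes $-\gamma_n\tau$, but the paper's computation uses $-\gamma_n(\mathcal A\tau)$; you have inherited the statement's slip into the one step on which the whole identity rests.) On your secondary technical point: the cleanest way to see that $\div(\mathcal A\tau)\in L^2(\Omega;\mathbb V)$ is to take the antisymmetric part of $\mathcal A\tau+\nabla v+q=0$, which gives $q=-\as(\nabla v)\in H^1(\Omega;\mathbb A)$ because $v\in H^2(\Omega)$, and hence $\mathcal A\tau=-\nabla v-q$ has an $L^2$ divergence; appealing only to ``the $H^2$-regularity of $v$'' without controlling $\div q$ leaves a gap, since $q$ is a priori only $L^2$.
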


\begin{proof}
Since 
 $H_{\Gamma_N}(\div, \Omega;\mathbb S)\times  H^1_{\Gamma_D}(\Omega;\mathbb V) \times
L^2({\color{black}\Omega ;}\mathbb A) \subset H(\div,\T;\mathbb S) \times H^1(\T;\mathbb V) \times L^2({\color{black}\T ;}\mathbb A) = \bV$ the solution $\bfv =(\tau,v,q)$ of the adjoint problem \eqref{eq:adj} lies in $\bV$. 
Any test functions $(\mu,\lambda,\rho)\in \bV$ allow for 
the product
    \begin{align*}
        ((\tau,v,q), (\mu,\lambda,\rho))_{\bV} = (\div \tau, \div_\T\mu) + (\tau, \mu) 
                                    + ( \nabla v, \nabla_\T\lambda) + (v,\lambda) + (q,\rho).
    \end{align*} 
where the scalar product $(\cdot,\cdot)_{\bV}$ induces the $\Vert\cdot\Vert_{\bV}$ norm. Moreover, the adjoint problem implies $\div \tau = g$ and thus
    \begin{align*}
        (\div\tau, \div_\T \mu) = (g, \div_\T \mu) = b((0,g,0,0),(\mu,\lambda,\rho)).
    \end{align*}
    The constitutive equation $\nabla v = - \mathcal{A}\tau - q$ and an integration by parts leads to
    \begin{align*}
        ( \nabla v, \nabla_\T\lambda) &= -(\mathcal A\tau, \varepsilon_\T(\lambda)) - (q,\nabla_\T \lambda) \\
        &=-\langle\gamma_n (\mathcal A \tau), \lambda\rangle_{\partial\T} + (\div \mathcal A\tau, \lambda) - (q,\nabla_\T \lambda)\\
        &= -b(0,0,\gamma_n (\mathcal A\tau),0),(\mu,\lambda,\rho)) + (\div \mathcal A\tau,\lambda) - (q,\nabla_\T \lambda).
    \end{align*}
    Combining the above equations, we conclude 
    \begin{align*}
    ((\tau,v,q), (\mu,\lambda,\rho))_\bV &= (\div \tau, \div_\T\mu) + (\tau, \mu) 
                                    + ( \nabla v, \nabla_\T\lambda) + (v,\lambda)
                                    \underbrace{-(q,\nabla_\T \lambda) - (q,p) - (\mathcal{A} q , \mu)}_{-b((q,0,0,0),(\mu,\lambda,\rho))}\\
                                  &= b((-q,g,-\gamma_n(\mathcal A\tau),0),(\mu,\lambda,\rho)) + (\div\mathcal A\tau +v ,\lambda) + (\mathcal{A}\mathcal{A}^{-1}\tau,\mu)\ ,
    \end{align*}
    since $(\mathcal{A} q , \mu) = 0$.
    For
$
        \bfw:= (-q,g,-\gamma_n (\mathcal A\tau),0) + (\sigma^*, u^*, \gamma_n\sigma^*, \gamma_0u^*)
$
    with $(\sigma^*, u^*)$ solution to  \eqref{eq:adjointelasticity}
    we obtain
    \begin{align*}
        ((\tau,v,q), (\mu,\lambda,\rho))_{\bV} = b(\bfw,(\mu,\lambda,\rho)) \quad \forall (\mu,\lambda,\rho) \in \bV.
    \end{align*}
    Furthermore, the assumptions on the domain leading to  \eqref{eq:regularity} also imply
\begin{align}
\Vert u \Vert_{H^2(\Omega)} + \Vert \sigma\Vert_{H^1(\T)} &
\leq C (\Vert \div{\mathcal A \tau}+v
\Vert 
+ \Vert \tau \Vert_{H^1(\T)}) 
\end{align}
and
\begin{align*}
    \Vert v \Vert_{H^2(\Omega)} + \Vert \tau\Vert_{H^1(\T)} + \Vert q \Vert &\leq C \Vert g\Vert  .
\end{align*}
        Therefore, we conclude 
    \begin{align*}
        \Vert v\Vert_{H^2(\Omega)} + \Vert\tau\Vert_{H^1(\T)} + \Vert q \Vert + \Vert u^*\Vert_{H^2(\Omega)} + \Vert \sigma^*\Vert_{H^1(\T)} \leq C \Vert g\Vert.
    \end{align*}
\end{proof}
In order to estimate the distance of the error to $U_h^{k,j}$, we will need to proof in Lemma \ref{lem:h+} that
the representation formula of the above lemma implies  an orthogonality relation of the type
$$b(\bfu -\bfu_h ,\bfv) = (u-u_h,g) + \hat\rho(\bfu -\bfu_h,\bv)$$
for any $\bv \in \bV $
where $\hat\rho(\bfu -\bfu_h,\bv)
=
- \langle \hat u-\hat u_h, \tau\cdot\nu\rangle_{\partial\T}
                                - \langle\hat\sigma_{n} -\hat\sigma_{n,h},v\rangle_{\partial\T}
$ vanishes. For the convenience of the reader, we therefore recall the crucial property of the trace-spaces before proceeding with Lemma \ref{lem:h+} and the estimation of the distance of the error to $U_h^{k,j}$. 

\begin{lemma}[\text{\cite[Lemma 3.1]{KFD16}}]\label{lem:traces}
    Let $\Gamma_D$ and $\Gamma_N$ be relative open subsets in $\partial \Omega$,
    such that $\partial \Omega = \overline{\Gamma_D \cup \Gamma_N}$ and
    $\Gamma_D \cap \Gamma_N = \emptyset$.
    \begin{enumerate}
        \item Let $v\in H^1(\T)$. Then $v\in H^1_{\Gamma_D}(\Omega)$ iff
        $\langle \hat\tau,v\rangle_{\partial\T} =0\ \forall \hat\tau \in
        H^{-1/2}_{\Gamma_N}(\partial\T)$.
        \item Let $\tau \in H(\div,\T)$. Then $\tau \in H_{\Gamma_N}(\div,\Omega)$ iff 
        $\langle \hat u, \tau\cdot\nu\rangle_{\partial\T}=0 \ \forall \hat u\in H^{1/2}_{\Gamma_D} (\partial\T)$.
    \end{enumerate}
\end{lemma}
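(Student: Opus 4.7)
The plan is to prove both statements by elementwise integration by parts combined with the structural definitions of the skeleton spaces $H^{-1/2}_{\Gamma_N}(\partial\T)$ and $H^{1/2}_{\Gamma_D}(\partial\T)$. The key identity I would exploit, valid for any $v \in H^1(\T)$ and any $q \in H(\div,\Omega)$ with associated normal trace $\hat\tau|_{\partial T}=(q\cdot n_T)|_{\partial T}$, is
\begin{align*}
\langle \hat\tau, v\rangle_{\partial\T} \;=\; \sum_{T\in\T}\int_T \bigl(\div q \cdot v + q\cdot \nabla_\T v\bigr)\, dx ,
\end{align*}
obtained by applying the Gauss formula on each element and summing. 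This identity makes the forward implications immediate and reduces the reverse implications to testing against a sufficiently rich family of representers.

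For the forward direction of (1), I would pick $\hat\tau \in H^{-1/2}_{\Gamma_N}(\partial\T)$ represented by some $q\in H_{\Gamma_N}(\div,\Omega)$. Because $v\in H^1_{\Gamma_D}(\Omega)$, the piecewise gradient coincides with the distributional one, so the identity collapses into the global duality $\langle q\cdot n,v\rangle_{\partial\Omega}$. This vanishes since $v$ vanishes on $\Gamma_D$ and $q\cdot n$ vanishes on $\Gamma_N$, and $\partial\Omega$ is covered by these disjoint relatively open pieces. Part (2) follows by the symmetric argument: represent $\hat u$ by some $w\in H^1_{\Gamma_D}(\Omega)$ and repeat the computation with the roles of $v$ and $\tau$ interchanged.

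For the reverse direction of (1), I would establish interelement continuity and the Dirichlet condition separately. For continuity, I would test with $q\in C^\infty_c(\Omega)^d$, whose normal trace lies in $H^{-1/2}_{\Gamma_N}(\partial\T)$ trivially; the identity then isolates, on each interior face $F=\partial T_+\cap\partial T_-$, the contribution $\int_F (q\cdot n_+)(v|_{T_+}-v|_{T_-})\,ds$, so arbitrariness of $q$ forces every jump to vanish and thus $v\in H^1(\Omega)$. For the boundary condition, I would test with $q\in H_{\Gamma_N}(\div,\Omega)$ whose normal trace is concentrated on $\Gamma_D$; the identity reduces to $\langle q\cdot n,v\rangle_{\Gamma_D}=0$, forcing $v|_{\Gamma_D}=0$. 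Part (2) is again symmetric: test with $\hat u\in H^{1/2}_{\Gamma_D}(\partial\T)$ to extract both the normal continuity of $\tau$ across interior faces and the vanishing of $\tau\cdot\nu$ on $\Gamma_N$.

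The main obstacle is the reverse direction, specifically justifying that the families of representers span enough functions to separate interior jumps from boundary traces. This hinges on the surjectivity of the normal trace operator from $H_{\Gamma_N}(\div,\Omega)$ onto the space of $H^{-1/2}$-functions supported on $\overline{\Gamma_D}$, together with its $H^1_{\Gamma_D}$-analogue for part (2); these surjectivity statements follow from standard extension results on Lipschitz domains equipped with the complementary partition $\partial\Omega=\overline{\Gamma_D\cup\Gamma_N}$ with $\Gamma_D$ of positive surface measure.
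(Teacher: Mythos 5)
The paper does not prove this lemma at all: it is quoted verbatim from \cite[Lemma 3.1]{KFD16} purely for the reader's convenience, so there is no in-paper argument to compare against. Your proof is the standard one and is essentially the argument given in that reference: elementwise integration by parts produces the key identity $\langle \hat\tau, v\rangle_{\partial\T} = \sum_{T}\int_T(\div q\, v + q\cdot\nabla_\T v)\,dx$; the forward implications follow because the broken gradient coincides with the distributional one and the resulting global boundary pairing vanishes on the complementary boundary parts; and the reverse implications follow by first testing with $q\in C^\infty_c(\Omega)^d$ to identify the distributional gradient of $v$ with $\nabla_\T v\in L^2(\Omega)$ (equivalently, to kill the interelement jumps), and then testing with general representatives to recover the essential boundary condition. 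The only step that genuinely requires care is the one you explicitly flag: the surjectivity of the normal trace of $H_{\Gamma_N}(\div,\Omega)$ onto the $H^{-1/2}(\partial\Omega)$-distributions supported in $\overline{\Gamma_D}$, together with the fact that the annihilator of that set inside $H^{1/2}(\partial\Omega)$ consists exactly of the traces vanishing on $\Gamma_D$; this holds for Lipschitz dissections $\partial\Omega=\overline{\Gamma_D\cup\Gamma_N}$ and is established in the cited reference, so citing it (as you do) closes the argument.
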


We are now in place to prove the crucial error estimate $|(u-u_h,g)|\leq C h \Vert\bfu-\bfu_h\Vert_ {\bU} \Vert g\Vert$ for any $g \in U_h^{k,j}$ which is the essence of the next lemma. The proof relies on the equivalence of the DPG formulation \eqref{eq:dpgh} with the following mixed formulation: find $(\bfepsilon_h,\bfu_h) \in
    V_h\times U_h$ such that
\begin{align}
\label{eq:dpgmixedh}
    a((\bfepsilon_h,\bfu_h);(\bfv_h,\bfw_h))  &= l(\bfv_h)   &&\forall \bfv_h\in V_h\ \forall\bfw_h\in U_h
\end{align}
with $a((\bfepsilon_h,\bfu_h);(\bfv_h,\bfw_h)) = (\bfepsilon_h,\bfv_h)_\bV +
b(\bfu_h,\bfv_h) - b(\bfw_h,\bfepsilon_h) $. The continuous counterpart reads 
\begin{align}
\label{eq:dpgmixed}
a((\bfepsilon, \bfu);(\bfv, \bfw)) = l(\bfv)\ {\color{black}\forall} \bfv\in\bV \ {\color{black} \forall}  \bfw\in \bU,
\end{align}
where $\bfu \in \bU$ solves \eqref{eq:dpg-abstract} and $\bfepsilon = 0$.
Recall from \cite{CDG14} that  $\bfepsilon_h \in \bV_h$ provide a reliable error
    estimator, i.e. 
    \begin{align}\label{eq:epsilonhiserrorestimator}
        \Vert \bfepsilon_h\Vert_\bV \lesssim \Vert
    \bfu-\bfu_h\Vert_\bU .  \end{align}

\begin{lemma}
[Distance of the error to $U_h^{k,j}$]\label{lem:h+}
For $k \geq 0$ and $j=0,1$, let $\bfu\in \bU$ be the solution to the ultra-weak formulation \eqref{eq:dpg} and
    $\bfu_h \in \Uh $ be the solution 
    of the discrete problem \eqref{eq:dpgh}. 
    Then, there exists a constant $C>0$ independent of $h$, such that
    \begin{align*}
        |(u-u_h,g)|\leq C h \Vert\bfu-\bfu_h\Vert_ {\bU} \Vert g\Vert 
    \end{align*}
    for any $g \in U_h^{k,j}$.
\end{lemma}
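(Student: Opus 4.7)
The plan is a duality argument based on the adjoint representation in Lemma~\ref{lem:repres}. Given $g\in U_h^{k,j}$, I let $\bv=(\tau,v,q)\in\bV$ be the corresponding adjoint solution and $\bw=\Theta^{-1}\bv$ its preimage. First I establish the identity $(u-u_h,g)=b(\bu-\bu_h,\bv)$. Expanding the definition of $b$, the two boundary pairings vanish by Lemma~\ref{lem:traces}, since $\tau\in H_{\Gamma_N}(\div,\Omega;\mathbb S)$ is paired with $\hat u-\hat u_h\in H^{1/2}_{\Gamma_D}(\partial\T)$, and $v\in H^1_{\Gamma_D}(\Omega;\mathbb V)$ with $\hat\sigma_n-\hat\sigma_{n,h}\in H^{-1/2}_{\Gamma_N}(\partial\T)$. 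Using the symmetry of $\tau$, the antisymmetry of $q$, and the adjoint constitutive law $\mathcal A\tau+\nabla v+q=0$, the three volume stress contributions collapse to $(\sigma-\sigma_h,\mathcal A\tau+\nabla v+q)=0$, while $\div\tau=g$ turns the remaining term into $(u-u_h,g)$.

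Next, I combine the Fortin splitting $b(\bu-\bu_h,\bv)=b(\bu-\bu_h,\bv-\Pi\bv)+b(\bu-\bu_h,\Pi\bv)$ with the two Galerkin orthogonalities of the mixed form \eqref{eq:dpgmixedh},
\[
b(\bu-\bu_h,\bv_h)=(\bfepsilon_h,\bv_h)_\bV\ \ \forall\bv_h\in\bV_h,\qquad b(\bw_h,\bfepsilon_h)=0\ \ \forall\bw_h\in\bU_h^{k,j}.
\]
Applying the first with $\bv_h=\Pi\bv$ and invoking $(\bfepsilon_h,\bv)_\bV=b(\bw,\bfepsilon_h)=b(\bw-\bw_h,\bfepsilon_h)$ from Lemma~\ref{lem:repres} and the second orthogonality, I obtain for any $\bw_h\in\bU_h^{k,j}$
\[
(u-u_h,g)=b(\bu-\bu_h,\bv-\Pi\bv)+b(\bw-\bw_h,\bfepsilon_h)-(\bfepsilon_h,\bv-\Pi\bv)_\bV.
\]
Continuity of $b$ and the reliability bound $\|\bfepsilon_h\|_\bV\lesssim\|\bu-\bu_h\|_\bU$ from \eqref{eq:epsilonhiserrorestimator} reduce the proof to the two approximation estimates $\|\bv-\Pi\bv\|_\bV\lesssim h\|g\|$ and $\inf_{\bw_h\in\bU_h^{k,j}}\|\bw-\bw_h\|_\bU\lesssim h\|g\|$.

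The first estimate uses the regularity supplied by Lemma~\ref{lem:repres}. The crucial simplification is the commuting diagram $\div_\T\Pi^{\div,\mathbb S}_{k+2}\tau=\Pi_{k+1}\div\tau=\Pi_{k+1}g=g$, which makes the divergence contribution to $\|\tau-\Pi^{\div,\mathbb S}_{k+2}\tau\|_{H(\div,\T)}$ vanish because $g\in P^{k+j}(\T;\mathbb V)\subset P^{k+1}(\T;\mathbb V)$. Taking antisymmetric parts of the adjoint constitutive law yields $q=-\as(\nabla v)\in H^1(\Omega;\mathbb A)$ with $\|q\|_{H^1}\lesssim\|v\|_{H^2}\lesssim\|g\|$, so all remaining contributions to $\|\bv-\Pi\bv\|_\bV$ are of order $h\|g\|$ by standard approximation estimates for the Fortin operator.

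The main obstacle is the second estimate. Writing
\[
\bw=(\sigma^*-q,\,u^*+g,\,\gamma_n(\sigma^*-\tau),\,\gamma_0 u^*),
\]
this element does not fit the trace-compatibility structure of Theorem~\ref{thm:rates} (the first and third components are unrelated), and its second component $u^*+g$ is only piecewise smooth. The key idea is that $g\in U_h^{k,j}$ is already discrete, so I choose the second component of $\bw_h$ to be $\Pi_{k+j}u^*+g\in P^{k+j}(\T;\mathbb V)$, which reduces the second-component error to $\|u^*-\Pi_{k+j}u^*\|\lesssim h\|u^*\|_{H^2}$. The remaining three components are bounded by $Ch\|g\|$ via an $L^2$-projection onto $P^k(\T;\mathbb M)$ for $\sigma^*-q\in H^1(\T;\mathbb M)$, the Raviart--Thomas interpolant applied to $\sigma^*-\tau\in H^1(\T;\mathbb S)\cap H_{\Gamma_N}(\div,\Omega;\mathbb S)$ whose normal trace lies in $P^k_{\Gamma_N}(\E;\mathbb V)$, and the Scott--Zhang interpolant \eqref{eq:scott-zhang} applied to $u^*\in H^2(\Omega)$; all rates follow from the a priori regularity in Lemma~\ref{lem:repres}.
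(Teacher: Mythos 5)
Your argument reproduces the paper's proof essentially step for step: the same adjoint duality identity $(u-u_h,g)=b(\bu-\bu_h,\bv)$ via Lemma~\ref{lem:traces} and the adjoint equations, the same use of the representation $\bw=\Theta^{-1}\bv$ together with the mixed Galerkin orthogonality (you merely write the bilinear form $a$ out in components), the same reliability bound \eqref{eq:epsilonhiserrorestimator}, and the same two decisive observations, namely that the commuting-diagram property annihilates the divergence contribution because $g$ is already piecewise polynomial of degree $\leq k+1$, and that the discrete datum $g$ can be kept verbatim in the approximation of $\bw$. The only cosmetic deviations (taking the Fortin operator as test-space approximant where the paper hand-picks $\bv_h$ with a Scott--Zhang component, and bounding $q=-\as(\nabla v)$ directly rather than via $\nabla v+\mathcal A\tau$) do not change the structure or the outcome.
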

\begin{proof}
For $g \in U_h^{k,j} \subset L^2(\Omega;\mathbb V)$, let $\bfv = (\tau,v,q)\in \bV$ denote the solution to the adjoint problem \eqref{eq:adj} with data $g$, i.e.
\begin{align*}
        b(\bfu -\bfu_h ,\bfv) &=
        (\sigma -\sigma_h,\mathcal{A}\tau)
                                + (\sigma-\sigma_h,\nabla v)
                                + (\sigma-\sigma_h,q) 
                                +(u-u_h,\div\tau)
                                + \hat\rho(\bfu -\bfu_h,\bv).
    \end{align*}
Since Lemma \ref{lem:traces} implies that $\hat\rho(\bfu -\bfu_h,\bv)$ vanishes, the equations $\div \tau=g$ and the constitutive equation $\mathcal A \tau + \nabla v +q = 0$ from the adjoint problem \eqref{eq:adj} lead to
\begin{align*}
        b(\bfu -\bfu_h ,\bfv) 
        &= (u-u_h,g) .
    \end{align*}
{\color{black}The property $b(\bfw,\tilde \bfv) = (\bfv,\tilde\bfv)_{\bV} = (\tilde\bfv,\bfv)_{\bV}$ {\color{black}for all} $\tilde{\bfv}\in \bV$ and $\bfw = \Theta^{-1}  \bfv$ from 
Lemma \ref{lem:repres}, imply   
\begin{align*}
  (u-u_h,g)  &=       b(\bfu -\bfu_h ,\bfv) \\
&=(\bfepsilon-\bfepsilon_h,\bfv)_{\bV} + b(\bfu-\bfu_h,\bfv) -
        (\bfv,\bfepsilon-\bfepsilon_h)_{\bV}\\
        &= a((\bfu-\bfu_h, \bfepsilon -\bfepsilon_h),(\bfw,\bfv)).
    \end{align*}}
The Galerkin orthogonality shows that any $(\bfw_h,\bfv_h)\in \Uh\times V_h$ statisfies
\begin{align*}
  (u-u_h,g)  &= a((\bfu-\bfu_h, \bfepsilon -\bfepsilon_h),(\bfw -\bfw_h,\bfv -\bfv_h)) .
  \end{align*}
Consequently, the {\color{black}boundedness} of $a$ and \eqref{eq:epsilonhiserrorestimator} imply 
    \begin{align*}
        (u-u_h,g) 
                  &\lesssim \Vert \bfu -\bfu_h \Vert_\bU (\Vert \bfw -\bfw_h\Vert_\bU + \Vert \bfv-\bfv_h\Vert_\bV) .
    \end{align*}
We now estimated the two terms in the bracket on the right-hand side separately.
    \begin{itemize}
        \item {\bf Estimation of $\Vert \bfv -\bfv_h\Vert_\bV$, j=0
        }
        
  For $\bfv_h = {\color{black}(\Pi_{k+2}^{\div, \mathbb S}\tau, \Pi_1^\nabla v},\Pi_0 q) \in \bV_h$, the approximation properties of the projections  \eqref{eq:div-sym}, \eqref{eq:scott-zhang} and Lemma \ref{lem:repres} imply   
    \begin{align*}
        \Vert \bfv-\bfv_h\Vert_{\bV} &\leq \Vert v 
        - \Pi^\nabla_1 v\Vert_{H^1(\Omega)} + \Vert \tau 
        - \Pi^{\div,\mathbb S}_{{\color{black}k+2}} \tau\Vert_{H(\div,\Omega)} +\Vert q -\Pi_0 q\Vert \\
        &\lesssim h \Vert g \Vert + \Vert\div(\tau - \Pi^{\div,\mathbb S}_{\color{black}k+2} \tau)\Vert + \Vert q - \Pi_0 q \Vert.
    \end{align*}
    Moreover, the commutativity property {\color{black}\eqref{eq:commu_Pi_div}} of $\Pi^{\div, \mathbb S}_{\color{black}k+2}$, the adjoint
    problem and $g \in P^k(\T)$ lead to
    \begin{align*}
        \Vert\div(\tau - \Pi^{\div,\mathbb S}_{\color{black}k+2} \tau)\Vert = \Vert (1-\Pi_{\color{black}k+1})\div\tau\Vert = \Vert (1-\Pi_{\color{black}k+1}) g\Vert = 0. 
    \end{align*} 
For the last term, we infer
    \begin{align*}
        \Vert \Pi_0 q -q \Vert 
        &= \Vert \Pi_0 \nabla v -\nabla v + \Pi_0 \mathcal{A}\tau - \mathcal A \tau\Vert\\
        &\leq \Vert \Pi_0 \nabla v -\nabla v\Vert + \Vert\Pi_0 \mathcal{A}\tau - \mathcal A \tau\Vert\\
        &\lesssim h \Vert v\Vert_{H^2(\Omega)} + h\Vert \tau\Vert_{H^1(\T)} \lesssim h \Vert g\Vert.
    \end{align*}
    Overall, we obtain
        \begin{align}
    \label{eq:h+1}\Vert\bfv-\bfv_h\Vert_\bV\lesssim h\Vert g\Vert .
    \end{align}
    \item {\bf Estimation of $\Vert \bfv -\bfv_h\Vert_\bV$, j=1
        }
    We follow the lines of the proof of the case $j=0$ choosing $\bfv_h =({\color{black}\Pi_{k+2}^{\div,\mathbb S}\tau,\Pi_1^\nabla v},\Pi_0 q) \in \bV_h$. This leads to 
    \begin{align}
    \label{eq:h+2}
        \Vert\bfv-\bfv_h\Vert_\bV\lesssim h\Vert g\Vert .
    \end{align}
    \item {\bf Estimation of $\Vert \bfw -\bfw_h\Vert_\bU$
        }\\
     Considering the representation $\bfw =
    (-q,g,-\gamma_n\tau,0) + \tilde\bfw$ from Lemma \ref{lem:repres},  we choose 
    $$\bfw_h = (-\Pi_0 q,g,-\gamma_n \Pi_0^{\div} \tau, {\color{black}0}) + \tilde\bfw_h  ,$$ 
    {\color{black}whereby} $\tilde \bfw_h=(\Pi_k w, \Pi_k \chi,\gamma_0 (\Pi_{k+1}^\nabla w), \gamma_n(\Pi^{\div}_{k} \chi))$ {\color{black}denotes}
    best approximation and lead to 
    \begin{align}
        \Vert\bfw-\bfw_h\Vert_\bU \leq\Vert q - \Pi_0 q  \Vert +\Vert\gamma_n(\tau-\Pi^{\div}_0 \tau)\Vert_{H^{-1/2}(\partial\T)} + \Vert\tilde\bfw-\tilde\bfw_h\Vert_\bU  .
    \end{align}
    The first two terms can be estimate using the approximation property of $\Pi_0 q$ in $L^2(\Omega)$ and
    $\gamma_n\Pi^{\div}_k$ in $H^{-1/2}(\partial\T)$. Lemma 4 and Theorem 3 imply
        \begin{align}
        \Vert \tilde{\bfw} - \tilde{\bfw}_h \Vert_{\bU} \lesssim h (\Vert u^* \Vert_{H^2(\Omega)} + \Vert\sigma^*\Vert_{H^1(\T)})\leq h \Vert g\Vert
    \end{align}
    and thus
    \begin{align}
        \label{eq:h+3}
        \Vert {\bfw} - {\bfw}_h \Vert_{\bU} \lesssim  h \Vert g\Vert  .
    \end{align}
    \end{itemize}
    The estimates \eqref{eq:h+1} (respectively \eqref{eq:h+2}) and \eqref{eq:h+3} lead to
    \begin{align*}
        |(u-u_h,g)| \leq C h\Vert \bfu-\bfu_h\Vert_\bU\Vert g\Vert
    \end{align*}
    in the case $j=0$ (respectively $j=1$). 
\end{proof}
\section{Superconvergence through increased polynomial degree and postprocessing}\label{sec:superconvergence}
The previous section shows that the error $||u-u_h||$ is almost orthogonal to any $g\in \bU_h^{k,0}$. 
As an auxiliary result, we also obtain the supercloseness of $u_h$ to the $L^2$ projection $\Pi_k u$, as 
stated in the following theorem. {\color{black}This is a vector valued generalisation from the scalar result in \cite[Theorem 3]{ultraweak-duality} and is included here for the convenience of the reader.}
\begin{theorem}[Supercloseness to $L^2$ projection] \label{thm:supercloseness}
    Let $\bfu = (\sigma,u,\hat\sigma_n,\hat u)\in \bU$ the solution to the
    ultra-weak formulation \eqref{eq:dpg} and assume that $u \in H^{k+2}(\Omega;\mathbb V)$ and
    $\sigma \in H^{k+1}(\T;\mathbb M)$. Let $\bfu_h =
    (\sigma_h,u_h,\hat\sigma_n,\hat u)\in \bU_h^{k,0}$ the solution to the ultra-weak
    formulation \eqref{eq:dpgh}, then it holds 
    \begin{align*}
    \Vert u_h -\Pi_k u\Vert &\leq C h^{k+2}(\Vert u\Vert_{H^{k+2}(\Omega)} + \Vert\sigma\Vert_{H^{k+1}(\T)}), \\
    \Vert u-\Pi_ku\Vert &\leq \Vert u-u_h\Vert\leq \Vert u-\Pi_ku\Vert + C h^{k+2}(\Vert u\Vert_{H^{k+2}(\Omega)} + \Vert\sigma\Vert_{H^{k+1}(\T)}).
    \end{align*}
\end{theorem}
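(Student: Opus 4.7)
The plan is to exploit the almost-orthogonality result of Lemma \ref{lem:h+} with a carefully chosen test function. Since $u_h$ is the second component of $\bfu_h \in \bU_h^{k,0}$, it lies in $P^k(\T;\mathbb V)$, and the componentwise $L^2$ projection $\Pi_k u$ lies in the same space. Therefore $g := u_h - \Pi_k u$ is an admissible choice in $U_h^{k,0}$, and this is precisely the setting where Lemma \ref{lem:h+} can be applied.

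The key computation uses the defining property of the $L^2$ projection, namely $(u - \Pi_k u, w_h) = 0$ for every $w_h \in P^k(\T;\mathbb V)$, to rewrite
\begin{align*}
\|u_h - \Pi_k u\|^2
&= (u_h - u,\, u_h - \Pi_k u) + (u - \Pi_k u,\, u_h - \Pi_k u) \\
&= -(u - u_h,\, u_h - \Pi_k u).
\end{align*}
Applying Lemma \ref{lem:h+} to the right-hand side with $g = u_h - \Pi_k u$ yields
\begin{align*}
\|u_h - \Pi_k u\|^2 \leq C h \,\|\bfu - \bfu_h\|_{\bU}\, \|u_h - \Pi_k u\|,
\end{align*}
and cancelling one factor gives $\|u_h - \Pi_k u\| \leq C h \,\|\bfu - \bfu_h\|_{\bU}$.

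To conclude the first estimate I would combine this inequality with the quasi-optimality of the DPG discretization and the a priori rate of Theorem \ref{thm:rates}: under the assumed regularity $u \in H^{k+2}(\Omega;\mathbb V)$ and $\sigma \in H^{k+1}(\T;\mathbb M)$, Theorem \ref{thm:rates} supplies $\|\bfu - \bfu_h\|_{\bU} \leq C h^{k+1}\bigl(\|u\|_{H^{k+2}(\Omega)} + \|\sigma\|_{H^{k+1}(\T)}\bigr)$, and plugging this in produces the desired $h^{k+2}$ superclose bound. For the sandwich inequality, the lower bound $\|u - \Pi_k u\| \leq \|u - u_h\|$ is the standard best-approximation property of the $L^2$ projection, which applies since $u_h \in P^k(\T;\mathbb V)$. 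The upper bound is simply a triangle inequality $\|u - u_h\| \leq \|u - \Pi_k u\| + \|\Pi_k u - u_h\|$ combined with the supercloseness estimate already derived.

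The proof is essentially mechanical once the duality machinery of Lemmas \ref{lem:repres} and \ref{lem:h+} is available; the conceptual difficulty has already been absorbed into Lemma \ref{lem:h+}, whose proof carefully sets up the mixed adjoint formulation so that the skeleton contributions cancel and the surplus power of $h$ emerges. Here the only point to check is that $u_h - \Pi_k u$ is genuinely in the slot of $U_h^{k,j}$ to which Lemma \ref{lem:h+} is sensitive, and this is automatic because both $u_h$ and $\Pi_k u$ belong to $P^k(\T;\mathbb V)$.
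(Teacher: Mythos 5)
Your proposal is correct and follows essentially the same route as the paper: choose $g$ as the difference between $u_h$ and $\Pi_k u$ (the paper uses $g=\Pi_k u-u_h$, you use the negative, which is immaterial since Lemma \ref{lem:h+} bounds $|(u-u_h,g)|$), use the $L^2$-projection orthogonality to identify $\Vert g\Vert^2$ with $(u-u_h,g)$, invoke Lemma \ref{lem:h+} together with the rate from Theorem \ref{thm:rates}, cancel a factor of $\Vert g\Vert$, and finish the sandwich bound by the best-approximation property and the triangle inequality. The only cosmetic difference is that you cite the quasioptimality and Theorem \ref{thm:rates} explicitly where the paper absorbs them into a single chain of inequalities.
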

\begin{proof}
    The triangle inequality and the properties of $\Pi_k$ lead to 
    \begin{align*}
        \Vert u-\Pi_ku\Vert \leq \Vert u-u_h\Vert\leq \Vert u-\Pi_ku\Vert + \Vert\Pi_k(u-u_h)\Vert.
    \end{align*}
    Choosing $g:= \Pi_k u -u_h \in P^k(\T,\mathbb V) = U_h^{k,0}$ leads to
    \begin{align*}
        \Vert g \Vert^2 = (g,g) = (\Pi_k(u-u_h),g) = (u-u_h,g).
    \end{align*}
    {\color{black}Lemma \ref{lem:h+} implies}
    \begin{align*}
        \Vert g\Vert^2 = (u-u_h,g) \lesssim h \Vert\bfu-\bfu_h\Vert_\bU\Vert g\Vert \lesssim h^{k+2}(\Vert u\Vert_{H^{k+2}(\Omega)} + \Vert\sigma\Vert_{H^{k+1}(\T)})\Vert g\Vert.
    \end{align*}
    Dividing by $\Vert g\Vert$ concludes the proof.
\end{proof}

The a priori estimates of section \ref{sec:apriori} prove the same order of convergence in all variables, i.e.
for the stress and the displacements. However, the point of using an augmented trial space (i.e. the finite
element space $\Uh$ with $j=1$) is to obtain an improved convergence rate for the displacement field. The aim of
this section is therefore to prove that in the case $j=1$ the error in the displacement field $\|u-u_h\|$
converges at a higher rate than the total error. To achieve this, we use the improved convergence rate from
Section \ref{sec:apriori} and combine it with the result of our duality argument from Lemma \ref{lem:h+}.
{\color{black}The following proofs use similar arguments as the proof of Theorem \ref{thm:supercloseness}.}
\begin{theorem}[Improved convergence rate for $j=1$] \label{thm:superconvergence} For  $f\in L^2(\Omega;\mathbb V)$, let $\bfu
    =(\sigma,u,\hat\sigma_n,\hat u)\in \bU$ be the solution to the ultra-weak
    formulation \eqref{eq:dpgmixed}. Let $\bfu_h \in  \bU_h^{k,1}$ the solution to the
    corresponding discrete problem \eqref{eq:dpgmixedh}. 
    %Assume also that
    %$RT^{k+1}_{\Gamma_N}(\T;\mathbb M)\times S^1_{\Gamma_D}(\T;\mathbb V)\times  P_0(\T) \subseteq V_h$, 
    If $u\in H^{k+2}(\Omega;\mathbb V)$ and $\sigma\in
    H^{k+1}(\T;\mathbb M)$, then 
    \begin{align*}
        \Vert u-u_h\Vert \leq C h^{k+2} (\Vert u \Vert_{H^{k+2}(\Omega)} + \Vert\sigma\Vert_{H^{k+1}(\T)})
    \end{align*}
    holds.
\end{theorem}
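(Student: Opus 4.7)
The plan is to mirror the supercloseness argument of Theorem \ref{thm:supercloseness}, but now exploit that the displacement component of $\bU_h^{k,1}$ is the larger space $P^{k+1}(\T;\mathbb V)$, so the natural test object in Lemma \ref{lem:h+} becomes $g := \Pi_{k+1} u - u_h \in U_h^{k,1}$.

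First I would split the error by a triangle inequality,
\[
\Vert u - u_h \Vert \;\leq\; \Vert u - \Pi_{k+1} u \Vert \;+\; \Vert \Pi_{k+1} u - u_h \Vert.
\]
The first term is handled immediately by the standard $L^2$-projection estimate $\Vert u - \Pi_{k+1} u\Vert \lesssim h^{k+2}\Vert u\Vert_{H^{k+2}(\Omega)}$, so everything reduces to bounding the supercloseness term $\Vert \Pi_{k+1} u - u_h\Vert$.

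For that, set $g := \Pi_{k+1} u - u_h \in P^{k+1}(\T;\mathbb V) = U_h^{k,1}$. The defining orthogonality of $\Pi_{k+1}$ gives $(u - \Pi_{k+1} u, g) = 0$, so
\[
\Vert g \Vert^2 \;=\; (\Pi_{k+1} u - u_h, g) \;=\; (u - u_h, g).
\]
Lemma \ref{lem:h+} (in its $j=1$ form, which is exactly why the second bullet of its proof was carried out) then yields $|(u-u_h,g)| \leq C h \Vert \bfu - \bfu_h\Vert_{\bU}\Vert g\Vert$, and dividing by $\Vert g\Vert$ produces $\Vert g\Vert \leq C h \Vert\bfu - \bfu_h\Vert_{\bU}$.

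To close the estimate I would invoke the improved a priori bound: under the regularity $u \in H^{k+2}(\Omega;\mathbb V)$ and $\sigma \in H^{k+1}(\T;\mathbb M)$, combining the quasioptimality theorem with Theorem \ref{thm:rates} (applied componentwise to the best approximation) gives
\[
\Vert \bfu - \bfu_h \Vert_{\bU} \;\leq\; C\, h^{k+1}\bigl(\Vert u\Vert_{H^{k+2}(\Omega)} + \Vert\sigma\Vert_{H^{k+1}(\T)}\bigr).
\]
Substituting into the previous bound yields $\Vert g\Vert \leq C h^{k+2}(\Vert u\Vert_{H^{k+2}(\Omega)} + \Vert\sigma\Vert_{H^{k+1}(\T)})$, and the triangle inequality from the first step concludes the proof. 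The main obstacle has really already been absorbed upstream: the duality argument in Lemma \ref{lem:h+} required a careful choice of $\bfv_h \in \bV_h$ so that the enriched test space $P^{k+d}(\T;\mathbb V)$ (rather than just $P^1$) is large enough to absorb the adjoint solution; given that, the present argument is just bookkeeping, and the sole nontrivial point is confirming that $g \in U_h^{k,1}$, which holds precisely because the augmentation $j=1$ replaces $P^k$ by $P^{k+1}$ in the displacement trial space.
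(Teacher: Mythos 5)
Your proposal is correct and follows essentially the same route as the paper's own proof: the same triangle-inequality split with $g:=\Pi_{k+1}u-u_h\in U_h^{k,1}$, the same use of the $L^2$-orthogonality of $\Pi_{k+1}$ to write $\Vert g\Vert^2=(u-u_h,g)$, and the same combination of Lemma \ref{lem:h+} with the $O(h^{k+1})$ a priori bound from Theorem \ref{thm:rates} (via quasioptimality) to gain the extra power of $h$. The only difference is presentational: you spell out the quasioptimality step that the paper leaves implicit.
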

\begin{proof}
    Using the triangle inequality we obtain
    \begin{align}
    \label{eq:superconvergencetri}
        \Vert u-u_h\Vert\leq\Vert u-\Pi_{k+1}u\Vert+\Vert\Pi_{k+1} u- u_h\Vert 
        \leq\Vert u-\Pi_{k+1}u\Vert+\Vert g\Vert 
    \end{align}
    with $g:= \Pi_{k+1}u-u_h \in P^{k+1}(\T) = U_h^{k,1}$. 
The first term can
    be estimated using the approximation property of $\Pi_{k+1}$, i.e.
    \begin{align}
    \label{eq:superconvergence1}
        \Vert u -\Pi_{k+1}u\Vert\lesssim h^{k+2}\Vert u\Vert_{H^{k+2}(\Omega)}.
    \end{align}    
    Moreover, since
    \begin{align*}
        \Vert g \Vert^2 =(g,g) = (\Pi_{k+1} (u-u_h),g) = (u-u_h,g),
    \end{align*}
    Lemma \ref{lem:h+} and Theorem \ref{thm:rates} lead to
    \begin{align*}
        \Vert g\Vert^2 &=(u-u_h,g) \lesssim h \Vert \bfu-\bfu_h\Vert_{U} \Vert g\Vert
        \lesssim h h^{k+1} (\Vert u\Vert_{H^{k+2}(\Omega)} + \Vert\sigma\Vert_{H^{k+1}(\T)})\Vert g\Vert 
    \end{align*}
    and thus to
    \begin{align}
        \label{eq:superconvergence2}
\Vert g\Vert \lesssim h^{k+2} (\Vert
    u\Vert_{H^{k+2}(\Omega)} + \Vert \sigma\Vert_{H^{k+1}(\T)}) . \end{align}  
Inserting \eqref{eq:superconvergence2} and \eqref{eq:superconvergence1} in \eqref{eq:superconvergencetri} finishes the proof.
\end{proof}

Another possibility to achieve higher convergence rates is to postprocess the part of the solution $u_h$. To this aim, we 
introduce the space of rigid body motion $\RM(\T)$, defined as the kernel 
of the symmetric gradient
\begin{align*}
    \RM(\T) = \{v \in H^1(\T;{\color{black}\mathbb V})\ |\ \varepsilon_T(v)=0\ \forall T \in\T \} \subset P^1(\T)
\end{align*}
and let $\Pi_{\mathrm{rm}}$ denotes the $L^2$-projection onto $\RM({\color{black}\T})$. For the convenience of the reader, we state the well-known approximation property of this projection in the next remark.
\begin{lemma}
    The projection $\Pi_{\mathrm{rm}}$ fullfils the following approximation property 
    \begin{align}\label{eq:app_rm}
        \Vert \Pi_{\mathrm{rm}} v - v \Vert \leq C h \Vert \varepsilon_\T(v)\Vert 
    \end{align}
    for all $v\in H^1(\T)$.  
\end{lemma}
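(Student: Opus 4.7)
The plan is to reduce the global estimate to a scaled Korn-type inequality on each simplex. Since $\RM(\T)$ consists of piecewise rigid body motions, the $L^2(\Omega)$-projection $\Pi_{\mathrm{rm}}$ decouples elementwise into the local $L^2(T)$-projections $\Pi_{\mathrm{rm},T}$ onto $\RM(T)$. This yields
\begin{align*}
\Vert \Pi_{\mathrm{rm}} v - v\Vert^2 = \sum_{T\in\T} \Vert \Pi_{\mathrm{rm},T} v - v\Vert_{L^2(T)}^2,
\end{align*}
and by the best-approximation property of the $L^2$-projection
\begin{align*}
\Vert \Pi_{\mathrm{rm},T} v - v\Vert_{L^2(T)} = \inf_{w\in\RM(T)}\Vert v - w\Vert_{L^2(T)} .
\end{align*}

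Next, I would invoke Korn's second inequality on a reference simplex $\hat T$: there exists $C_{\hat T}>0$ such that
\begin{align*}
\inf_{\hat w\in\RM(\hat T)} \Vert \hat v - \hat w\Vert_{L^2(\hat T)} \leq C_{\hat T}\, \Vert \varepsilon(\hat v)\Vert_{L^2(\hat T)} \qquad \text{for all } \hat v \in H^1(\hat T;\mathbb V).
\end{align*}
This follows from the usual Korn inequality on $\hat T$ combined with the fact that $\RM(\hat T)$ is precisely the kernel of the symmetric gradient; a Peetre--Tartar-type compactness argument (exploiting the compact embedding $H^1(\hat T)\hookrightarrow L^2(\hat T)$) allows one to quotient out $\RM(\hat T)$ and drop the full $H^1$-norm on the right-hand side.

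Finally, a standard affine scaling from $\hat T$ to $T$ under the shape-regularity assumption on $\T$ (the Jacobian of the affine map has norm comparable to $h_T$ and determinant comparable to $h_T^d$, both with constants depending only on the shape-regularity) transfers this to
\begin{align*}
\inf_{w\in\RM(T)} \Vert v - w\Vert_{L^2(T)} \leq C\, h_T\, \Vert \varepsilon(v)\Vert_{L^2(T)} .
\end{align*}
Squaring, summing over $T\in\T$ and bounding $h_T\leq h$ gives the desired global inequality.

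The only genuinely nontrivial point is the reference-element inequality; everything else is routine mesh-scaling bookkeeping. In particular, one must verify that after removing the rigid body component the $L^2$-best-approximation error is controlled by $\|\varepsilon(\hat v)\|_{L^2(\hat T)}$ alone (and not by the full $H^1$-seminorm), which is exactly where the compactness argument enters. Shape-regularity then ensures that the constant $C$ obtained in the elementwise bound is uniform over the family of meshes.
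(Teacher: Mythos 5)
Your overall strategy is sound and lands close to the paper's own argument: both proofs reduce the claim to an elementwise estimate with an $h$-uniform constant. The routes differ in one respect. You bound the $L^2$-best-approximation error in $\RM(T)$ directly by $h_T\Vert\varepsilon(v)\Vert_{L^2(T)}$ via a quotient Korn inequality on a reference simplex. The paper instead exploits the inclusion $P^0(\T;\mathbb V)\subset\RM(\T)$: since $\Pi_{\mathrm{rm}}$ agrees with $\Pi_0$ when tested against piecewise constants, the best approximation in $\RM(T)$ is at least as good as that by constants, so the factor $h$ is carried entirely by the ordinary Poincar\'e estimate $\Vert v-\Pi_0 v\Vert\leq Ch\Vert\nabla_\T v\Vert$, and Korn's second inequality on the quotient $H^1(\T)/\RM(\T)$ is invoked only afterwards to replace $\nabla_\T$ by $\varepsilon_\T$. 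The paper's split is marginally more elementary because the $h$-dependence comes from a scalar Poincar\'e constant; your version packages the two steps into a single inequality.

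The one step you should not call routine bookkeeping is the affine transfer from $\hat T$ to $T$. Unlike the full gradient, the symmetric gradient is not covariant under a general affine map $x=B\hat x+b$: for $\hat v=v\circ F_T$ one gets $\hat\nabla\hat v=\bigl((\nabla v)\circ F_T\bigr)B$, and symmetrizing this does not yield any expression controlled by $\Vert\varepsilon(v)\Vert_{L^2(T)}$ alone unless $B$ is a similarity. Likewise, the pullback of $\RM(\hat T)$ under $F_T^{-1}$ consists of affine fields whose linear parts $WB^{-1}$ need not be skew-symmetric, so it is not $\RM(T)$. Hence the single-reference-element inequality does not transfer to $T$ with a shape-regularity-only constant by a direct change of variables. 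The fix is standard but must be stated: rescale only by the similarity $x\mapsto x/h_T$ (under which both $\varepsilon$ and $\RM$ are covariant and which produces exactly one factor $h_T$), thereby reducing the claim to a Korn inequality with a constant that is uniform over the family of unit-diameter simplices whose chunkiness is bounded by the shape-regularity constant; uniformity over that family follows from a compactness argument (or from a quantitative Korn inequality with explicit dependence on the star-shapedness parameter). With that amendment your proof is complete; the same caveat applies, tacitly, to the constant in the paper's own appeal to Korn's inequality.
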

\begin{proof}
Let us first note that the left-hand side vanishes if the right-hand side is zero. In fact, if $\varepsilon_\T(v) = 0$,  $\Pi_{\mathrm{rm}} v = 0$ and the inequality becomes trivial.
Consider now $v\in H^1(\T) / \RM(\T)$. Then,
        \begin{align*}
            (\Pi_{\mathrm{rm}} v , w) = (\Pi_0 v ,w ) \quad \forall w \in P_0(\T).
        \end{align*}
        Therefore, $\Pi_{\mathrm{rm}}$ inherits the approximation 
        property of $\Pi_0$ and together with the Korn inequality, it holds
        \begin{align*}
            \Vert \Pi_{\mathrm{rm}} v -v \Vert \leq C h \Vert \nabla_\T v\Vert \leq C h \Vert \varepsilon_\T (v)\Vert .
        \end{align*}
\end{proof}
In order to post-process the part $u_h$ of the solution $\bfu_h = (\sigma_h,u_h,\hat\sigma_n,\hat u)\in U_h$ of the
ultra-weak formulation, we consider a local Neumann problem in the spirit of \cite{S88}:\\ Find $\tilde u_h
\in P^{k+1}(\T;\mathbb V)$ such that
\begin{align}\label{eq:post}
\begin{cases}
&\Pi_{\mathrm{rm}}
\tilde{u}_h = \Pi_{\mathrm{rm}} u_h, \\
 &   (\varepsilon(\tilde{u}_h),\varepsilon(v_h))  = ( \mathcal{A}\sigma_h,\varepsilon(v_h))
    \quad \forall v_h \in \{v \in P^{k+1}(\T;\mathbb V)\ |\ (v,w)_T = 0\ {\color{black} \forall T \in \T}, \ \forall w \in \RM(\T)\} .
\end{cases}
\end{align}

\begin{theorem}(Convergence rate of the post-processing)\label{thm:postprocessing}
    Let $\bfu =(\sigma, u, \hat\sigma_n,\hat u) \in \bU$ be the solution to the
    ultra-weak formulation \eqref{eq:dpgmixed} for some $f\in L^2(\Omega;\mathbb V)$, and assume that $u \in H^{k+2}(\Omega;\mathbb V)$ and
    $\sigma \in H^{k+1}(\T;\mathbb M)$. Let $\bfu_h =
    (\sigma_h,u_h,\hat\sigma_n,\hat u)\in \bU_h^{k,0}$ the solution to the ultra-weak
    formulation and $\tilde{u}_h\in P^{k+1}(\T;\mathbb V)$ the post-processing satisfying \eqref{eq:post}.
    Then, for $k\geq 1$
    \begin{align*}
        \Vert u_h-\tilde{u}_h\Vert \leq C h^{k+2} (\Vert u\Vert_{H^{k+2}(\Omega)} + \Vert\sigma\Vert_{H^{k+1}(\T)})
    \end{align*}
     holds.
\end{theorem}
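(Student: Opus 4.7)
The plan is to mimic the duality-plus-supercloseness argument used in the proof of Theorem~\ref{thm:supercloseness}, exploiting the two structural features of $\tilde u_h$: that $u_h-\tilde u_h$ lives in the enriched polynomial space $P^{k+1}(\mathcal T;\mathbb V)$ and that it is $L^2$-orthogonal to $\RM(\mathcal T)$ by construction. The Korn--Poincar\'e type approximation bound \eqref{eq:app_rm} applied to $u_h-\tilde u_h$ immediately reduces the task to showing
\[
\Vert \varepsilon_{\mathcal T}(u_h-\tilde u_h)\Vert \;\le\; C h^{k+1}\bigl(\Vert u\Vert_{H^{k+2}(\Omega)} + \Vert \sigma\Vert_{H^{k+1}(\mathcal T)}\bigr),
\]
which, combined with the extra factor $h$ from that approximation estimate, yields the stated $h^{k+2}$ rate.

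To obtain this bound, I would first use $u_h-\tilde u_h$ itself as test function in \eqref{eq:post}; this is admissible because $u_h-\tilde u_h\in P^{k+1}(\mathcal T;\mathbb V)$ and is element-wise $L^2$-orthogonal to $\RM$. The resulting identity is $\Vert \varepsilon_{\mathcal T}(u_h-\tilde u_h)\Vert^2 = (\varepsilon_{\mathcal T}(u_h)-\mathcal A\sigma_h,\,\varepsilon_{\mathcal T}(u_h-\tilde u_h))$. Using the continuous constitutive identity $\mathcal A\sigma=\varepsilon(u)$, the residual splits as $\varepsilon_{\mathcal T}(u_h)-\mathcal A\sigma_h = \varepsilon_{\mathcal T}(u_h-u)+\mathcal A(\sigma-\sigma_h)$, and the stress piece is handled directly by Theorem~\ref{thm:rates}, giving $\Vert \sigma-\sigma_h\Vert=O(h^{k+1})$. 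For the strain piece I would introduce the elementwise $H^1$-projection $\tilde u^\ast\in P^{k+1}(\mathcal T;\mathbb V)$ characterised by $\Pi_{\mathrm{rm}}\tilde u^\ast=\Pi_{\mathrm{rm}} u$ and $(\varepsilon(\tilde u^\ast),\varepsilon(v_h))_T = (\varepsilon(u),\varepsilon(v_h))_T$ for every $v_h\in P^{k+1}(T;\mathbb V)$ orthogonal to $\RM(T)$. Its Galerkin orthogonality, together with the fact that $\varepsilon_{\mathcal T}(u_h-\tilde u_h)$ is the strain of a function orthogonal to $\RM$ element-wise, gives $(\varepsilon_{\mathcal T}(u-\tilde u^\ast),\,\varepsilon_{\mathcal T}(u_h-\tilde u_h))=0$, so the problem reduces to estimating $\Vert \varepsilon_{\mathcal T}(u_h-\tilde u^\ast)\Vert$. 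I would split this through $\Pi_k u$: the first piece $\Vert\varepsilon_{\mathcal T}(u_h-\Pi_k u)\Vert$ is $O(h^{k+1})$ via an inverse estimate combined with the supercloseness bound $\Vert u_h-\Pi_k u\Vert=O(h^{k+2})$ of Theorem~\ref{thm:supercloseness}, while the second piece inherits the $H^1$-best-approximation rate $\Vert \varepsilon(u-\tilde u^\ast)\Vert = O(h^{k+1})\Vert u\Vert_{H^{k+2}}$.

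The main obstacle is precisely the use of the Galerkin orthogonality via $\tilde u^\ast$: a direct triangle-inequality split through $\Pi_k u$ alone would leave the term $\Vert\varepsilon_{\mathcal T}(\Pi_k u-u)\Vert$, which is only $O(h^k)$ and would spoil the target rate. The hypothesis $k\ge 1$ of the theorem enters exactly at this point, ensuring $\RM\subset P^k$ so that $\Pi_{\mathrm{rm}}(\Pi_k u-u)=0$, and hence $\Pi_{\mathrm{rm}}(u_h-\tilde u^\ast)=\Pi_{\mathrm{rm}}(u_h-\Pi_k u)=O(h^{k+2})$ by supercloseness; this compatibility of rigid-body projections is what makes the $\tilde u^\ast$-based orthogonality applicable. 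Collecting the estimates shows $\Vert\varepsilon_{\mathcal T}(u_h-\tilde u_h)\Vert = O(h^{k+1})(\Vert u\Vert_{H^{k+2}(\Omega)}+\Vert\sigma\Vert_{H^{k+1}(\mathcal T)})$, from which the claimed bound on $\Vert u_h-\tilde u_h\Vert$ follows.
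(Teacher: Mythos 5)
There is a genuine gap, and it sits exactly where you located the ``main obstacle''. Your opening reduction bounds $\Vert u_h-\tilde u_h\Vert\lesssim h\Vert\varepsilon_\T(u_h-\tilde u_h)\Vert$ (correct, since $\Pi_{\mathrm{rm}}(u_h-\tilde u_h)=0$) and then aims at $\Vert\varepsilon_\T(u_h-\tilde u_h)\Vert=O(h^{k+1})$. But this intermediate bound is generically false: $\varepsilon_\T(\tilde u_h)$ approximates $\varepsilon(u)=\mathcal A\sigma$ to order $h^{k+1}$ (that is the whole point of the post-processing), whereas the broken strain $\varepsilon_\T(u_h)$ of the $P^k$ field lies in $P^{k-1}(\T;\mathbb M)$ and can approximate $\varepsilon(u)$ only to order $h^{k}$; hence $\Vert\varepsilon_\T(u_h-\tilde u_h)\Vert\sim h^{k}$. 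This obstruction is what surfaces in your own chain as the term $\Vert\varepsilon_\T(\Pi_k u-u)\Vert=O(h^k)$, and the remedy you propose does not remove it: the compatibility $\Pi_{\mathrm{rm}}(\Pi_k u-u)=0$ controls only the $L^2$ rigid-body component of the various differences, which carries no strain at all, so it gives no information about $\Vert\varepsilon_\T(u_h-\tilde u^\ast)\Vert$. Relatedly, the quantity to be bounded should be read as $\Vert u-\tilde u_h\Vert$ (this is what the paper's proof actually estimates); $\Vert u_h-\tilde u_h\Vert$ itself cannot be $O(h^{k+2})$, since Theorem \ref{thm:supercloseness} gives the lower bound $\Vert u-u_h\Vert\geq\Vert u-\Pi_k u\Vert\sim h^{k+1}$ and a bound $\Vert u_h-\tilde u_h\Vert=O(h^{k+2})$ together with $\Vert u-\tilde u_h\Vert=O(h^{k+2})$ would contradict it.

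The paper's proof avoids ever differentiating $u_h$ (or $\Pi_k u$). It splits $u-\tilde u_h=(1-\Pi_{\mathrm{rm}})(u-\tilde u_h)+\Pi_{\mathrm{rm}}(u-u_h)$; the second term is $g:=\Pi_{\mathrm{rm}}(u-u_h)\in\RM(\T)\subset P^1(\T)\subset P^k(\T)$ for $k\geq1$ and is bounded by $\Vert g\Vert^2=(u-u_h,g)\lesssim h\Vert\bfu-\bfu_h\Vert_\bU\Vert g\Vert$ via the duality Lemma \ref{lem:h+}; the first term is $\lesssim h\Vert\varepsilon_\T(u-\tilde u_h)\Vert$ by \eqref{eq:app_rm}. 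The strain error is then split through the elementwise elliptic projection $\bar u_h\in P^{k+1}(\T;\mathbb V)$ of the \emph{exact} solution (your $\tilde u^\ast$): Galerkin orthogonality gives $\Vert\varepsilon_\T(u-\bar u_h)\Vert\lesssim h^{k+1}\Vert u\Vert_{H^{k+2}(\Omega)}$, and testing the two discrete problems with $\bar u_h-\tilde u_h$ yields $\Vert\varepsilon_\T(\bar u_h-\tilde u_h)\Vert^2=(\mathcal A(\sigma-\sigma_h),\varepsilon_\T(\bar u_h-\tilde u_h))\lesssim\Vert\bfu-\bfu_h\Vert_\bU\Vert\varepsilon_\T(\bar u_h-\tilde u_h)\Vert$, so only the $O(h^{k+1})$ stress error enters. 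If you want to keep your structure, you must replace the comparison of $\tilde u_h$ with $u_h$ by a comparison of $\tilde u_h$ with $\bar u_h$ (or with $u$), so that the residual you pair with $\varepsilon_\T(\cdot)$ is $\mathcal A(\sigma-\sigma_h)$ rather than $\varepsilon_\T(u_h)-\mathcal A\sigma_h$.
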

\begin{proof}
     $\Pi_{\mathrm{rm}} \tilde{u}_h =\Pi_{\mathrm{rm}} u_h$
    and the approximation properties \eqref{eq:app_rm} of $\Pi_{\mathrm{rm}}$ lead to
    \begin{align*}
        \Vert u - \tilde{u}_h\Vert \leq \Vert(1-\Pi_{\mathrm{rm}})(u-\tilde{u}_h)\Vert + 
        \Vert \Pi_{\mathrm{rm}}(u-\tilde{u}_h)\Vert \lesssim h \Vert \varepsilon_\T (u-\tilde{u}_h)\Vert + 
        \Vert \Pi_{\mathrm{rm}} (u-u_h)\Vert .
    \end{align*}
     Moreover, $g := \Pi_{\mathrm{rm}}(u-u_h) \in \RM(\T) \subset P^1(\T)$ 
     Lemma \ref{lem:h+} and Theorem \ref{thm:rates} imply
    \begin{align*}
        \Vert g \Vert^2 = (\Pi_{\mathrm{rm}}(u-u_h),g) = (u-u_h,g) \lesssim h \Vert \bfu-\bfu_h\Vert_\bU \Vert g\Vert 
        \lesssim h^{k+2} (\Vert u \Vert_{H^{k+2}(\Omega)} + \Vert\sigma\Vert_{H^{k+1}(\T)})\Vert g\Vert .
    \end{align*}
    It remains to estimate $\Vert\varepsilon_\T(u-\tilde u_h)\Vert$. Since
$\bar u_h\in P^{k+1}(\T;\mathbb V)$
satisfies
    \begin{align*}
        (\varepsilon(\bar u_h),\varepsilon(v_h)) &= ( \mathcal{A}\sigma,\varepsilon (v_h))_T
        &&\forall v_h\in \{v \in P^{k+1}(\T;\mathbb V)\ |\ (v,w)_T = 0\ {\color{black}\forall T \in \T},\ \forall w \in \RM(\T)\} ,
    \end{align*}
it holds
    \begin{align*}
        \Vert \varepsilon_\T(\bar u_h &-\tilde{u}_h)\Vert^2 = (-\mathcal{A}(\sigma-\sigma_h),\varepsilon_\T (\bar u_h-\tilde{u}_h)) \\
        &\lesssim \Vert \bfu - \bfu_h \Vert_\bU \Vert\varepsilon_\T(\bar u_h-\tilde u_h)\Vert \lesssim h^{k+1} (\Vert u \Vert_{H^{k+2}(\Omega)} 
        + \Vert\sigma\Vert_{H^{k+1}(\T)} ) \Vert\varepsilon_\T(\bar u_h-\tilde u_h)\Vert .
    \end{align*}
    Moreover, the Galerkin orthogonality $(\varepsilon_\T (u-\bar
    u_h),\varepsilon_\T(v_h)) = 0$ for all $v_h\in P^{k+1}(\T;\mathbb V)$ leads to
    \begin{align*}
        \Vert\varepsilon_\T(u-\bar u_h)\Vert  
        = \min_{v_h\in P^{k+1}(\T)} \Vert\varepsilon_\T(u- v_h)\Vert 
        \lesssim h^{k+1} \Vert u \Vert_{H^{k+2}(\Omega)}.
    \end{align*}
    Combining the previous inequalities finishes the proof:
    \begin{align*}
        \Vert u-\tilde u_h \Vert 
        &\lesssim  h \Vert \varepsilon_\T (u-\tilde{u}_h)\Vert + \Vert g\Vert \\
        &\lesssim h (\Vert\varepsilon_\T(u-\bar u_h)\Vert + \Vert\varepsilon_\T(\bar u_h-\tilde u_h)\Vert)   + h^{k+2} (\Vert u \Vert_{H^{k+2}(\Omega)} + \Vert\sigma\Vert_{H^{k+1}(\T)})\\
        &\lesssim h^{k+2} (\Vert u \Vert_{H^{k+2}(\Omega)} + \Vert\sigma\Vert_{H^{k+1}(\T)}).
    \end{align*}
\end{proof}
{\color{black}
\begin{remark}[Regularity of the solution]
    For the purpose of the exposition, we assumed previously $u\in H^2(\Omega)$. This can be not expected in general. {\color{black} 
    For example in general domains, let the solution
     $u \in H^{1+s}(\Omega)$ for some $s\in (1/2,k+1]$ and the adjoint solution $v \in H^{1+s'}(\Omega)$ for some $s'\in(1/2,1]$.
    In the presence of this different 
    regularity assumption, Theorems \ref{thm:supercloseness}, \ref{thm:superconvergence} and \ref{thm:postprocessing} remain valid by
    replacing $h^{k+2}$ by $h^{1+s+s'}$.
    }
\end{remark}
}
\phantom{.}\hspace{-2cm}\includegraphics[width=\paperwidth]{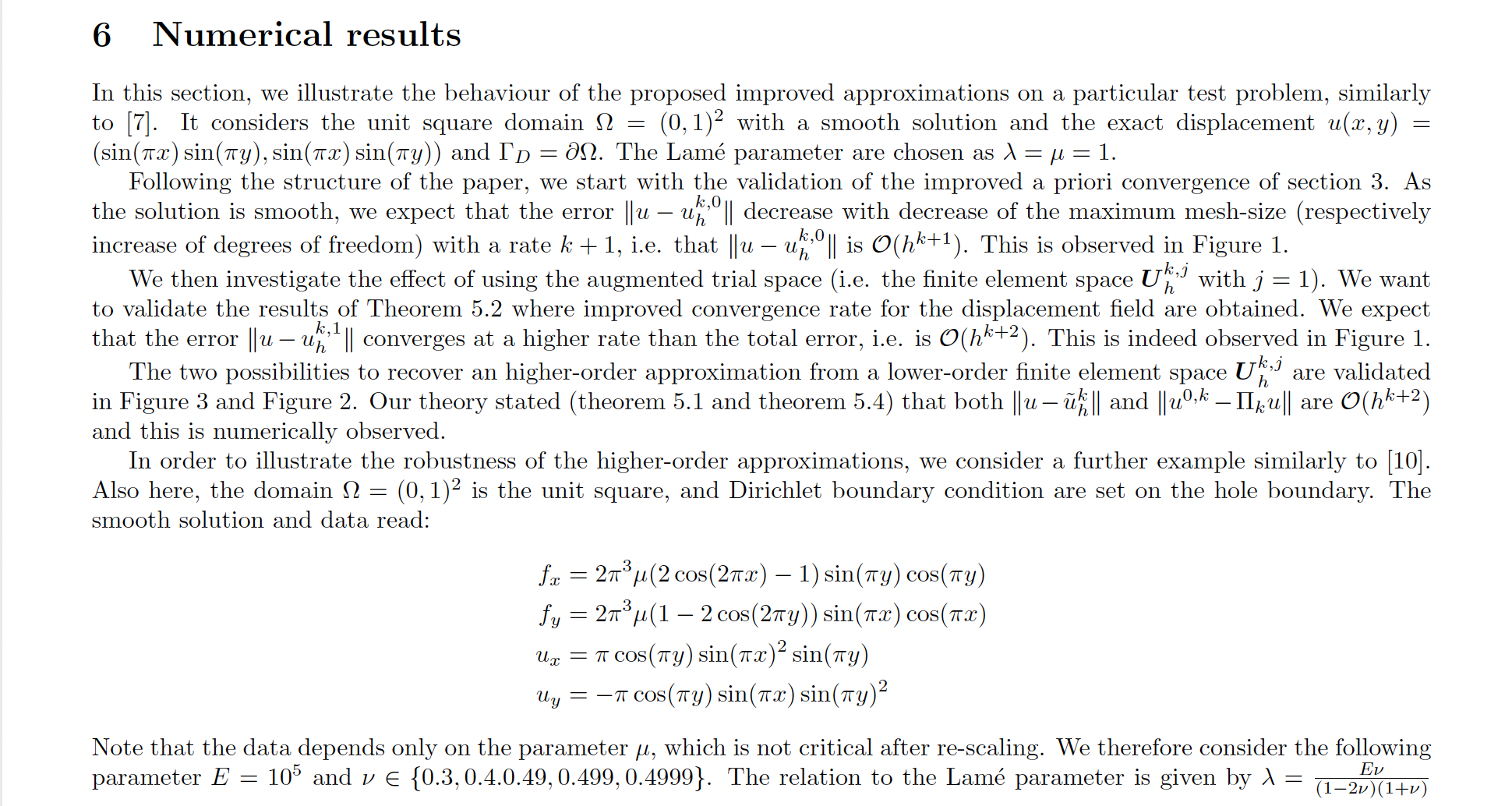}
\newpage
\includepdf[pages=11-14]{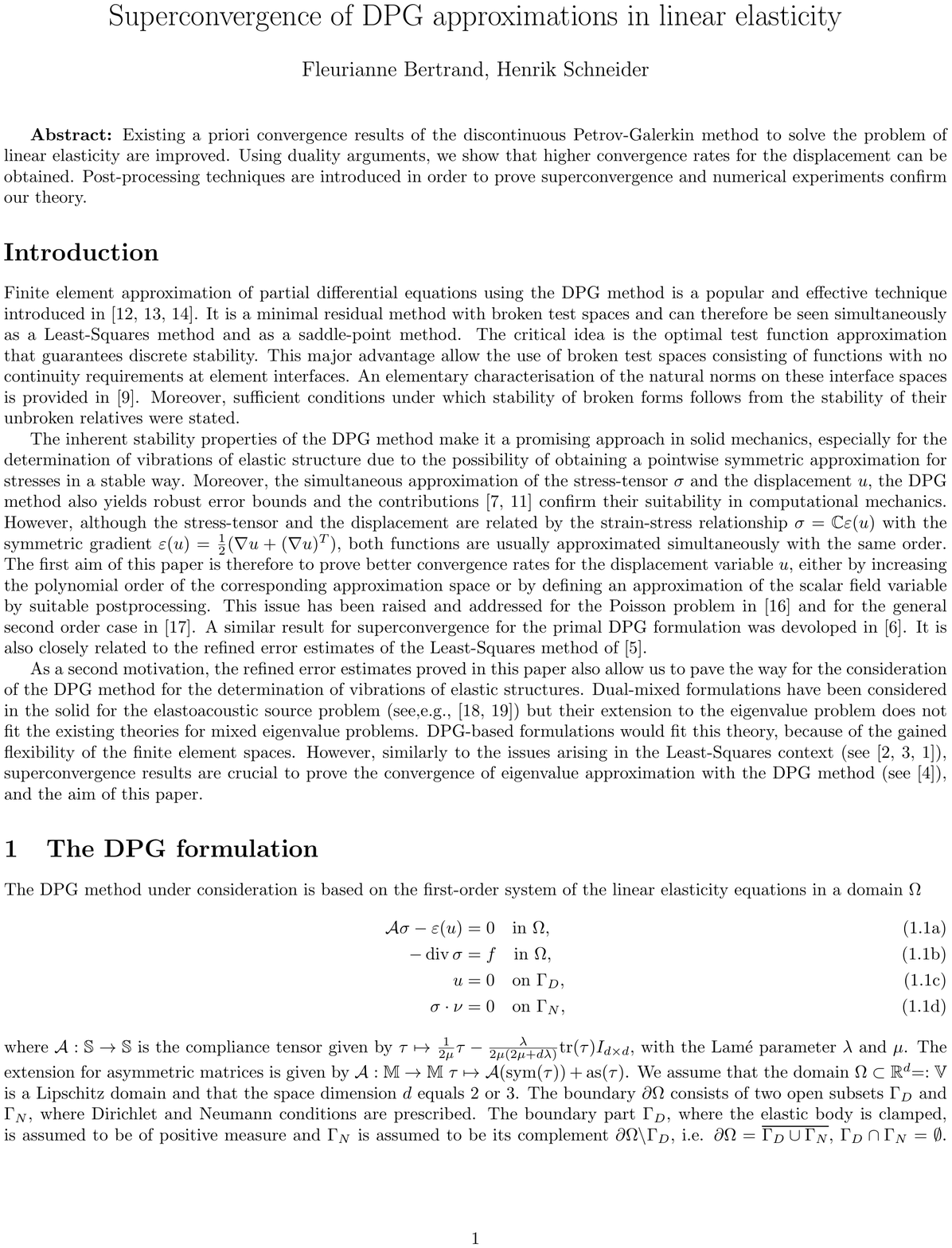}

\begin{figure}[h!]
    \centering
    \caption{L-shape adaptive shape after 10 Steps}
    \includegraphics[scale=0.2]{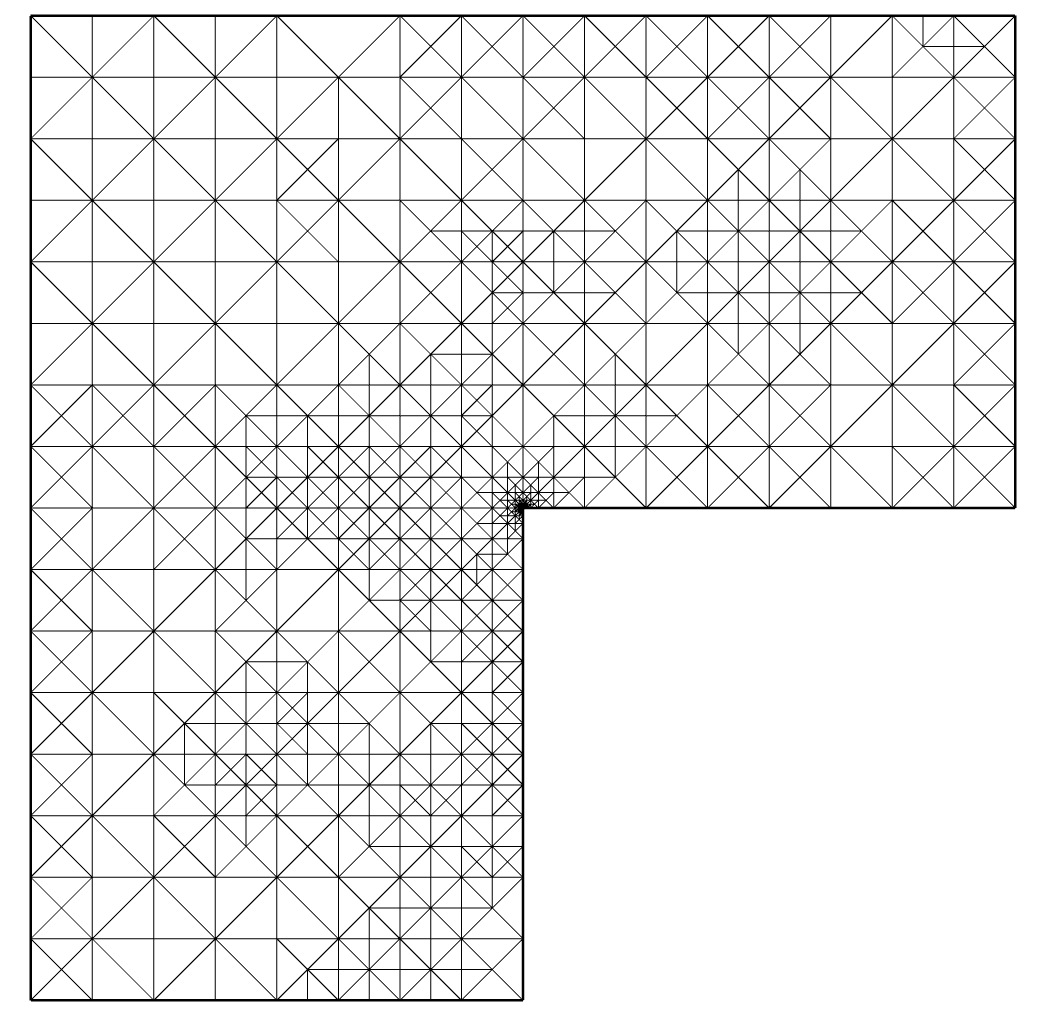}
    \label{fig:adap_mesh}
\end{figure}

\bibliographystyle{abbrv} 
\bibliography{biblo.bib}
\end{document}